\newtheorem{lemma}{Lemma}
\newtheorem{theorem}{Theorem}
\newtheorem{proposition}{Proposition}
\newtheorem{corollary}{Corollary}
\title{Mean Row Values in $(u,v)$-Calkin-Wilf Trees}
\author{Sandie Han, Ariane M. Masuda, Satyanand Singh, and Johann Thiel}
\begin{document}

\maketitle

\abstract{We fix integers $u,v \geq 1$, and consider an infinite binary tree $\mathcal{T}^{(u,v)}(z)$ with a root node whose value is a positive rational number $z$. For every vertex $a/b$, we label the left child as $a/(ua+b)$ and right child as $(a+vb)/b$.  The resulting tree is known as the $(u,v)$-Calkin-Wilf tree. As $z$ runs over  $[1/u,v]\cap \mathbb{Q}$, the vertex sets of 
$\mathcal{T}^{(u,v)}(z)$ form a partition of $\mathbb{Q}^+$. When $u=v=1$, the mean row value converges to $3/2$ as the row depth increases.  Our goal is to extend this result for any $u,v\geq 1$. We show that, when $z\in [1/u,v]\cap \mathbb{Q}$, the mean row value in  $\mathcal{T}^{(u,v)}(z)$ converges to a value close to $v+\log 2/u$ uniformly on $z$. }

\section{Introduction}
\label{sec:1}
In~\cite{N}, Nathanson defines an infinite binary tree generated by the following rules:
\begin{enumerate}
    \item fix two positive integers $u$ and $v$,
    \item label the root of the tree by a rational $z$, and
    \item for any vertex labeled $\dfrac{a}{b}$, label its left and right children by $\dfrac{a}{ua+b}$ and $\dfrac{a+vb}{b}$, respectively.
\end{enumerate}
In the case where $u$, $v$, and $z$ are equal to 1, the tree generated is the well-known Calkin-Wilf tree~\cite{CW} (see Figure~\ref{fig:CWtree}). Since Nathanson's definition represents a generalization\footnote{For other generalizations, see~\cite{BM,MS}.} of the Calkin-Wilf tree, we refer to trees defined in the above manner as $(u,v)$-Calkin-Wilf trees, and we denote them by $\mathcal{T}^{(u,v)}(z)$ (see Figure~\ref{fig:uvgraph}). The set of  depth $n$ vertices of $\mathcal{T}^{(u,v)}(z)$ is denoted by $\mathcal{T}^{(u,v)}(z;n)$. For example, we see from Figure~\ref{fig:CWtree} that $\mathcal{T}^{(1,1)}(1;1)=\{1/2,2\}.$

\begin{figure}[ht!]
\begin{center}
\begin{tikzpicture}[sibling distance=15pt]
\tikzset{level distance=30pt}
\Tree[.$1/1$ [.$1/2$ [.$1/3$ $1/4$ $4/3$ ]
   [.$3/2$ $3/5$ $5/2$ ] ] [.$2/1$ [.$2/3$  $2/5$ $5/3$ ] [.$3/1$ $3/4$ $4/1$ ] ]]
\end{tikzpicture}
\caption{The first four rows of the Calkin-Wilf tree.}\label{fig:CWtree}
\end{center}
\end{figure}
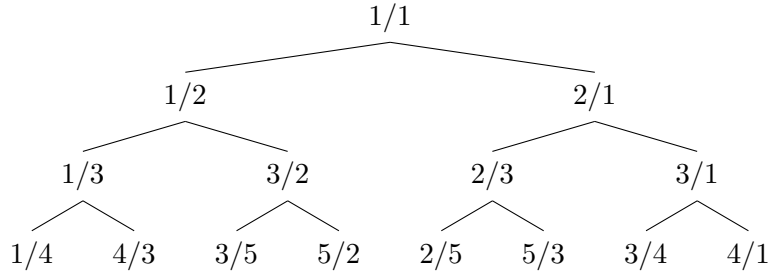

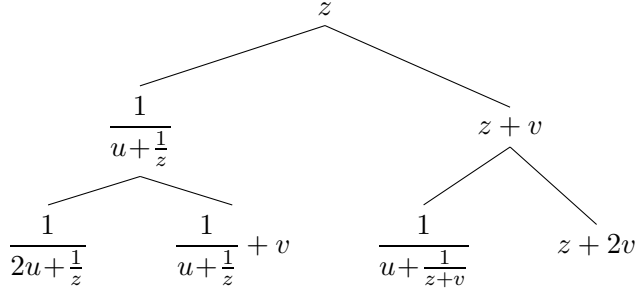
\begin{figure}[ht!]
\begin{center}
\begin{tikzpicture}[sibling distance=25pt]
\tikzset{level distance=45pt}
\Tree[.$z$ [.$\dfrac{1}{u\!+\!\frac{1}{z}}$ [.$\dfrac{1}{2u\!+\!\frac{1}{z}}$ ]
[.$\dfrac{1}{u\!+\!\frac{1}{z}}+v$ ] ] [.$z+v$  [.$\dfrac{1}{u\!+\!\frac{1}{z+v}}$ ] [.$z+2v$ ] ] ]
\end{tikzpicture}
\caption{The first three rows of $\mathcal T^{\;(u,v)}(z)$.}\label{fig:uvgraph}
\end{center}
\end{figure}

The vertices of $\mathcal{T}^{(1,1)}(1)$ are all positive rational numbers without any repetition~\cite{CW}.  More generally, the trees $\mathcal{T}^{(u,v)}(z)$ form a partition of
$\mathbb{Q}^+$ as $z$ runs over  $[1/u,v]\cap \mathbb{Q}$; see~\cite{N}.    The Calkin-Wilf tree has many other interesting properties~\cite{CW, HMST1,HMST2, N, Ne}, one of which is the fact that the mean value of vertices of depth $n$ converges to $3/2$ as $n\to\infty$~\cite{A,R}. Our main result generalizes this property for all $(u,v)$-Calkin-Wilf trees.

The proof that the mean value of vertices of depth $n$ converges to $3/2$ is not difficult and only makes use of one  property of the Calkin-Wilf tree; namely, both $a/b$ and $b/a$ appear (in symmetric positions) on every row; see Figure~\ref{fig:CWtree}.

\begin{proposition}\label{symprop}
If $\dfrac{a}{b}\in\mathcal{T}^{(1,1)}(1;n)$, then $\dfrac{b}{a}\in\mathcal{T}^{(1,1)}(1;n)$. 
\end{proposition}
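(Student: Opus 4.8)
The plan is to argue by induction on the depth $n$, exploiting the recursive definition of the tree directly. For the base case $n=0$ the only vertex is the root $1/1$, which is its own reciprocal, so the statement holds trivially.

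For the inductive step, assume the statement holds at depth $n$ and let $a/b\in\mathcal{T}^{(1,1)}(1;n+1)$. Since $n+1\ge 1$, the vertex $a/b$ is a child of some vertex $c/d\in\mathcal{T}^{(1,1)}(1;n)$, and I split into two cases according to which child it is. If $a/b$ is the left child of $c/d$, then $a/b=c/(c+d)$, i.e.\ $a=c$ and $b=c+d$, so $b/a=(c+d)/c$. By the inductive hypothesis $d/c\in\mathcal{T}^{(1,1)}(1;n)$, and its right child is $(d+c)/c=b/a$; hence $b/a\in\mathcal{T}^{(1,1)}(1;n+1)$. If instead $a/b$ is the right child of $c/d$, then $a/b=(c+d)/d$, so $a=c+d$ and $b=d$, giving $b/a=d/(c+d)$; again $d/c\in\mathcal{T}^{(1,1)}(1;n)$ by hypothesis, and its left child is $d/(d+c)=b/a$, so $b/a\in\mathcal{T}^{(1,1)}(1;n+1)$. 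This closes the induction.

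I would also note that the computation above is really the verification that a structural symmetry is respected by the labels: the involution of the underlying unlabeled binary tree that reflects the two subtrees at every node (so that ``left'' and ``right'' are everywhere interchanged) carries the vertex labeled $a/b$ to the vertex labeled $b/a$, and this reflection is depth‑preserving, which is precisely the claim that $b/a$ occurs in the symmetric position of the same row. There is no genuine obstacle here; the only points requiring a moment's care are that the case split is exhaustive — every non‑root vertex is either a left child or a right child — and that the degenerate possibility $c=d$, which occurs only at the root, is handled uniformly by the same two formulas (the children $1/2$ and $2/1$ of $1/1$ being reciprocal to each other).
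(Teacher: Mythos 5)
Your proof is correct and is exactly the induction on the depth $n$ that the paper indicates but omits: each case of your argument verifies that the reciprocal of a left (resp.\ right) child of $c/d$ is the right (resp.\ left) child of $d/c$. Nothing further is needed.
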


 The proof of Proposition~\ref{symprop} follows quickly from induction on the depth $n$. We omit the details.

\begin{theorem}\label{11mean}
For $n\geq 0$, let $A(n) = \dfrac{1}{2^n}\displaystyle\sum_{y\in\mathcal{T}^{(1,1)}(1;n)}y$. Then $\displaystyle\lim_{n\to\infty}A(n)=\dfrac{3}{2}.$
\end{theorem}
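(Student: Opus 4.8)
My plan is to derive a recursion for $S(n) := \sum_{y \in \mathcal{T}^{(1,1)}(1;n)} y$ and then exploit the symmetry from Proposition~\ref{symprop}. The children rules say that each vertex $a/b$ at depth $n$ contributes two vertices at depth $n+1$, namely $a/(a+b)$ and $(a+b)/b$. Summing these two values gives $\frac{a}{a+b} + \frac{a+b}{b} = \frac{a}{a+b} + 1 + \frac{a}{b}$, so

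\begin{equation}
S(n+1) = \sum_{a/b \in \mathcal{T}^{(1,1)}(1;n)} \left( \frac{a}{a+b} + 1 + \frac{a}{b} \right) = 2^n + S(n) + \sum_{a/b \in \mathcal{T}^{(1,1)}(1;n)} \frac{a}{a+b}.
\end{equation}

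The remaining sum $\sum a/(a+b)$ is the obstacle, and this is exactly where the symmetry is used: by Proposition~\ref{symprop}, the map $a/b \mapsto b/a$ is an involution on $\mathcal{T}^{(1,1)}(1;n)$, so pairing $a/b$ with $b/a$ gives $\frac{a}{a+b} + \frac{b}{a+b} = 1$ for each pair, hence $\sum_{a/b} \frac{a}{a+b} = \frac{1}{2}\cdot 2^n = 2^{n-1}$ for $n \geq 1$ (and one checks the $n=0$ case directly, where the single term $1/(1+1) = 1/2$ also equals $2^{-1}$). Therefore $S(n+1) = 2^n + 2^{n-1} + S(n) = 3 \cdot 2^{n-1} + S(n)$ for $n \geq 0$.

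With $S(0) = 1$, this first-order linear recursion telescopes: $S(n) = 1 + \sum_{k=0}^{n-1} 3\cdot 2^{k-1} = 1 + \frac{3}{2}(2^n - 1)$. Dividing by $2^n$ gives $A(n) = \frac{S(n)}{2^n} = \frac{3}{2} - \frac{1}{2^{n+1}}$, which tends to $3/2$ as $n \to \infty$. I expect no serious difficulty beyond carefully handling the base case $n=0$ in the symmetry argument (the fixed point $1/1$ of the involution must be accounted for, but since $\frac{1}{1+1} = \frac{1}{2}$ already equals half of its contribution to a "pair," the count still works out); all other steps are routine algebra.
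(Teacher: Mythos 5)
Your proof is correct and follows essentially the same route as the paper: both derive the recurrence $S(n+1)=S(n)+3\cdot 2^{n-1}$ by pairing $a/b$ with $b/a$ via Proposition~\ref{symprop} (the paper sums the contributions of both children of $a/b$ and of $b/a$ at once, while you isolate and evaluate $\sum a/(a+b)=2^{n-1}$ directly, which is the same symmetry used in a trivially different packaging). Your closed form $S(n)=\tfrac{3}{2}\cdot 2^n-\tfrac{1}{2}$ matches the paper's, and your handling of the fixed point is fine.
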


\begin{proof}
Let $S(n)=\displaystyle\sum_{y\in\mathcal{T}^{(1,1)}(1;n)}y$. Rewriting $y$ as $a/b$ and using both the definition of the Calkin-Wilf tree and Proposition~\ref{symprop}, we see that, for $n\geq 1$, 
\begin{align*}
    2S(n) &= \sum_{\frac{a}{b}\in\mathcal{T}^{(1,1)}(1;n-1)}\left(\frac{a}{a+b} + \frac{a}{b} + 1 + \frac{b}{b+a} + \frac{b}{a} + 1\right)\\
    &= \sum_{\frac{a}{b}\in\mathcal{T}^{(1,1)}(1;n-1)}\left(\frac{a}{b} + \frac{b}{a} + 3\right)\\
    &= 2S(n-1) + 3\cdot 2^{n-1}.
\end{align*}
This gives the recurrence relation $S(0)=1$ and $S(n)=S(n-1)+3\cdot 2^{n-2}$ for $n\geq 1$. Solving the recurrence relation gives that $S(n)=\frac{3}{2}\cdot 2^n-\frac{1}{2}$ for $n\geq 0$. The desired result follows immediately since $A(n)=S(n)/2^n.$  
\end{proof}

Let $S^{(u,v)}(z;n)=\displaystyle\sum_{y\in\mathcal{T}^{(u,v)}(z;n)}y$ and $A^{(u,v)}(z;n)=S^{(u,v)}(z;n)/2^n$. Suppose $uv>1$. As a consequence of Lemma~\ref{mono} and Theorem~\ref{mainthm}, we show  that if $z\in[1/u,v]\cap\mathbb{Q}$, then $\displaystyle\lim_{n\to\infty}A^{(u,v)}(z;n)$ exists\footnote{The reason for limiting our choice of roots to $[1/u,v]\cap\mathbb{Q}$ is that these rationals are the ``orphan" roots in the sense that they are not the children of {\em any} rational in {\em any} $(u,v)$-Calkin-Wilf tree~\cite{N}.}, that the limit is independent of the value of $z$, and that the limit has a value close to $v+\log{2}/u.$ Unfortunately, Proposition~\ref{symprop} does not generalize to other $(u,v)$-Calkin-Wilf trees by Lemma~\ref{cfterms}, so a different approach is needed in this broader setting.

At first the value $v+\log{2}/u$ may seem surprising, but a simple heuristic argument quickly leads to this quantity. Note that if $a/b$ is a vertex in a $(u,v)$-Calkin-Wilf tree, then its children are given by $$\frac{a}{ua+b}=\frac{1}{u+\frac{b}{a}}<\dfrac{1}{u}\quad\text{ and }\quad \frac{a+vb}{b} = \frac{a}{b} + v >v.$$ Following this pattern from depth $n$ to depth $n+1$ suggests that a quarter of all elements of a fixed (large) depth have integer part of roughly size $v$, an eighth have integer part of roughly size $2v$, etc. Similarly, half of all elements have a fractional part of roughly size $1/u$, a quarter have a fractional part of roughly size $1/(2u)$, etc. So we expect that 
\begin{align*}
    A^{(u,v)}(z;n) &\approx \frac{1}{2^n}\left(\frac{2^n}{4}\left(v+\frac{2}{u}\right) + \frac{2^n}{8}\left(2v+\frac{2}{2u}\right) + \frac{2^n}{16}\left(3v+\frac{2}{3u}\right)+\cdots  \right)\\
    &= \frac{v}{4}\sum_{k=0}^\infty \frac{k+1}{2^k} + \frac{1}{u}\sum_{k=1}^\infty \frac{1}{k2^k}\\
    &= v + \frac{\log{2}}{u},
\end{align*}
where the last equality follows from the Taylor series expansions for $1/(1-x)^2$ and $\log(1-x)$.

This heuristic throws away a lot of information from the denominator in the fractional part of each element. We would therefore expect the true value of $A^{(u,v)}(z;n)$ to be smaller than $v+\log{2}/u.$ 

As for the independence of the limit of $A^{(u,v)}(z;n)$ from $z\in[1/u,v]\cap\mathbb{Q}$, we note that if $a/b$ is a vertex in a $(u,v)$-Calkin-Wilf tree with continued fraction representation $a/b=[q_0,q_1,\dots,q_r]$, then the children of $a/b$ have easily computable continued fractions, as the next result shows.
\begin{lemma}({\cite[Lemma 5]{HMST1}})\label{cf}
 Let $a/b$ be a positive rational number with continued fraction representation $a/b=[q_0,q_1,\dots,q_r]$. It follows that
 \begin{enumerate}
    \item[(a)] if $q_0=0$, then $a/(ua+b)=[0,u+q_1,\dots,q_r]$;
    \item[(b)] if $q_0\neq0$, then $a/(ua+b)=[0,u,q_0,q_1,\dots,q_r]$;
    \item[(c)] and $(a+vb)/b=[v+q_0,q_1,\dots,q_r]$.
 \end{enumerate}
\end{lemma}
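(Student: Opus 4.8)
The plan is to reduce all three identities to two elementary rules for continued fractions, each immediate from the defining recursion $[q_0,q_1,\dots,q_r]=q_0+1/[q_1,\dots,q_r]$: the \emph{shift rule}, that $m+[q_0,q_1,\dots,q_r]=[m+q_0,q_1,\dots,q_r]$ for any integer $m$; and the \emph{reciprocal rule}, that $1/[q_0,q_1,\dots,q_r]$ equals $[0,q_0,q_1,\dots,q_r]$ when $q_0\geq 1$ and equals $[q_1,q_2,\dots,q_r]$ when $q_0=0$ (in the latter case $r\geq 1$ automatically, since $[0]=0$ is not a positive rational). I would state and justify these two rules first, as they are the only real ingredients.

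Part (c) then follows in a single line: $(a+vb)/b=v+a/b=v+[q_0,q_1,\dots,q_r]$, and the shift rule with $m=v$ gives $[v+q_0,q_1,\dots,q_r]$.

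For parts (a) and (b) I would begin from the identity $\dfrac{a}{ua+b}=\dfrac{1}{u+b/a}$, so that everything hinges on the continued fraction of $b/a=1/[q_0,q_1,\dots,q_r]$. When $q_0=0$, the reciprocal rule gives $b/a=[q_1,q_2,\dots,q_r]$; the shift rule gives $u+b/a=[u+q_1,q_2,\dots,q_r]$; and, since this value is at least $u\geq 1$, the reciprocal rule applies once more to yield $\dfrac{a}{ua+b}=[0,u+q_1,q_2,\dots,q_r]$, which is case (a). When $q_0\neq0$, the reciprocal rule gives $b/a=[0,q_0,q_1,\dots,q_r]$, the shift rule gives $u+b/a=[u,q_0,q_1,\dots,q_r]$, and one final application of the reciprocal rule gives $\dfrac{a}{ua+b}=[0,u,q_0,q_1,\dots,q_r]$, which is case (b).

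I expect the only subtle point—and the one I would treat most carefully—to be checking that each right-hand side is a genuine canonical continued fraction, i.e.\ that every partial quotient after the first is a positive integer. This is not a serious obstacle: since $u,v\geq 1$ and each $q_i$ with $i\geq 1$ is already a positive integer in the canonical expansion of $a/b$, the modified entries $u+q_1$ and $v+q_0$ remain positive integers, the inserted copies of $u$ sit in interior positions, and the tail $q_2,\dots,q_r$ (hence the trailing partial quotient) is untouched. Should one wish to avoid even this bookkeeping, an alternative route is a straightforward induction on $r$, peeling off $q_0$ at each step; but the direct argument above is shorter and, I believe, more transparent.
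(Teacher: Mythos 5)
Your proof is correct. The paper itself gives no argument for this lemma --- it is quoted verbatim from \cite[Lemma~5]{HMST1} --- so there is nothing to diverge from; your reduction to the shift rule $m+[q_0,\dots,q_r]=[m+q_0,q_1,\dots,q_r]$ and the reciprocal rule is the standard and essentially inevitable route, and all three cases check out. The only nitpick is in your final paragraph: in case (a) with $r=1$ the trailing partial quotient is not ``untouched'' --- it becomes $u+q_1$ --- but since $u+q_1\geq 2$ this causes no problem for canonicity, and you have already verified everything that matters.
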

It follows from the result above that, for large $n$, most vertices of depth $n$ will have approximately $n/2$ coefficients in their continued fraction expansions. This lowers the influence of the root on the value of $A^{(u,v)}(z;n)$ as it is quickly buried by the above process. We will make this notion precise in Lemma~\ref{mcount}.

%%%%%%%%%%%%%%%%%%%%%%%%%%%%%%%%%%%%%%%%%%%%%%%%
\section{Main Result}
\label{sec:2}

We show that for $z\in[1/u,v]\cap\mathbb{Q}$, the limit of $A^{(u,v)}(z;n)$ exists as $n\to\infty$ in two main steps:
\begin{enumerate}
    \item[(A)] First we show that, for $z=1/u$ or $z=v$, the mean $A^{(u,v)}(z;n)$ is monotonic increasing and bounded above as $n\to\infty$.
    \item[(B)] Second we show that $A^{(u,v)}(z_1;n)-A^{(u,v)}(z_2;n)\to 0$ as $n\to\infty$ for any $z_1,z_2\in[1/u,v]\cap\mathbb{Q}$.
\end{enumerate}

We begin with a useful lemma for comparing rational numbers based on their continued fraction coefficients.

\begin{lemma}(\cite[p.~101]{Ro})\label{contineq}
	Suppose that $\alpha,\beta\in\mathbb{Q}$ are distinct with  $\alpha=[p_0,p_1,\dots,p_s]$ and $\beta=[q_0,q_1,\dots,q_r]$. Let $k$ be the smallest index such that $p_k\neq q_k$. Then $\alpha < \beta$ if and only if $p_k<q_k$ when $k$ is even and $p_k>q_k$ when $k$ is odd. If no such $k$ exists and $n<m$, then $\alpha < \beta$ if and only if $n$ is even.
\end{lemma}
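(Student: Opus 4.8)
The statement is the classical comparison rule for regular continued fractions, so rather than invoke anything from the paper I would derive it directly from the elementary theory of convergents. Throughout I fix the standard normalization: a rational's expansion $[a_0,a_1,\dots,a_m]$ has $a_0\in\mathbb Z$, $a_i\ge 1$ for $i\ge 1$, and $a_m\ge 2$ when $m\ge 1$ (so the expansion is unique); and I use the usual convergents $(h_{-1},k_{-1})=(1,0)$, $(h_{-2},k_{-2})=(0,1)$, $h_j=a_jh_{j-1}+h_{j-2}$, $k_j=a_jk_{j-1}+k_{j-2}$, together with the determinant identity $h_jk_{j-1}-h_{j-1}k_j=(-1)^{j-1}$.

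The single analytic fact driving the whole argument is a monotonicity statement. For $m\ge 0$ and real $t>0$ set $g_m(t)=[a_0,\dots,a_{m-1},t]=\dfrac{t\,h_{m-1}+h_{m-2}}{t\,k_{m-1}+k_{m-2}}$, with $g_0(t)=t$. First I would observe that this is a M\"obius map with derivative $g_m'(t)=\dfrac{h_{m-1}k_{m-2}-h_{m-2}k_{m-1}}{(t\,k_{m-1}+k_{m-2})^2}=\dfrac{(-1)^{m}}{(t\,k_{m-1}+k_{m-2})^2}$, so $g_m$ is strictly increasing on $(0,\infty)$ when $m$ is even and strictly decreasing when $m$ is odd.

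Next, suppose $\alpha=[p_0,\dots,p_s]$ and $\beta=[q_0,\dots,q_r]$ agree through index $k-1$ (i.e.\ $p_i=q_i$ for $i<k$) and $p_k\ne q_k$. I would write $\alpha=g_k(\alpha^\ast)$ and $\beta=g_k(\beta^\ast)$ with $\alpha^\ast=[p_k,\dots,p_s]$, $\beta^\ast=[q_k,\dots,q_r]$ the $k$-th complete quotients; this uses only that the first $k$ partial quotients coincide, so the \emph{same} $g_k$ serves for both. The normalization gives $p_k\le\alpha^\ast<p_k+1$ and $q_k\le\beta^\ast<q_k+1$ (for $k\ge1$ because the tail contributes $1/[p_{k+1},\dots,p_s]\in(0,1)$, and for $k=0$ because $p_0=\lfloor\alpha\rfloor$), so for the distinct integers $p_k,q_k$ we have $\alpha^\ast<\beta^\ast\iff p_k<q_k$. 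Applying $g_k$ and the monotonicity above: if $k$ is even then $\alpha<\beta\iff\alpha^\ast<\beta^\ast\iff p_k<q_k$, and if $k$ is odd then $\alpha<\beta\iff\alpha^\ast>\beta^\ast\iff p_k>q_k$ — exactly the asserted rule. Finally, for the degenerate case where no such $k$ exists, after swapping names I may assume $s<r$, so $\beta=[p_0,\dots,p_s,q_{s+1},\dots,q_r]$ literally extends $\alpha=[p_0,\dots,p_s]$; then $\alpha=g_s(p_s)$ while $\beta=g_s(\beta^\ast)$ with $\beta^\ast=p_s+1/[q_{s+1},\dots,q_r]>p_s$, whence $\alpha<\beta\iff g_s$ increasing $\iff s$ even, which is the last clause (with $n=s$ the smaller length-index). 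The only thing needing care here is bookkeeping — pinning down the continued-fraction normalization so that $p_k\le\alpha^\ast<p_k+1$ holds uniformly, checking the $k=0$ and prefix cases separately, and keeping the parity in $g_m'$ straight — and there is no genuine obstacle beyond that.
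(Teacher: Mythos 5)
The paper offers no proof of this lemma at all---it is imported verbatim from Roberts \cite[p.~101]{Ro}---so there is no internal argument to compare yours against. Your proof is correct and complete, and it is the standard derivation: write $\alpha=g_k(\alpha^{\ast})$ and $\beta=g_k(\beta^{\ast})$ using the single M\"obius map $g_k$ determined by the shared prefix, read off the sign of $g_k'$ from the determinant identity $h_{m-1}k_{m-2}-h_{m-2}k_{m-1}=(-1)^m$, and locate the complete quotients in the disjoint intervals $[p_k,p_k+1)$ and $[q_k,q_k+1)$. You also correctly isolate the one point where care is genuinely needed: the strict bound $\alpha^{\ast}<p_k+1$ requires the normalization forbidding a final partial quotient equal to $1$ (without it the rule fails on, e.g., $[0,1,1]$ versus $[0,2]$), and this is consistent with the ``short continued fraction'' convention the paper adopts elsewhere, as in Lemma~\ref{mcount}. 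Finally, your reading of the garbled last sentence of the statement---the undefined symbols $n$ and $m$ must be the lengths $s$ and $r$---is clearly the intended one, and your prefix-case computation $\beta^{\ast}=p_s+1/[q_{s+1},\dots,q_r]>p_s=\alpha^{\ast}$ establishes exactly that clause.
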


We note here two useful results from~\cite{HMST1} that will be used to obtain our main result. Lemma~\ref{cfterms} and Corollary~\ref{cfdepth} show two things: that there is a very close relationship between two vertices in the same $(u,v)$-Calkin-Wilf tree via their continued fraction representations if one is the descendant of the other, and that the continued fraction representation of a vertex in a $(u,v)$-Calkin-Wilf tree encodes its depth in the tree.

\begin{lemma}({\cite[Theorem 3]{HMST1}})\label{cfterms}
Suppose that $z$ and $z'$ are positive rational numbers with continued fraction representations $z=[q_0,q_1,\dots,q_r]$ and $z'=[p_0,p_1,\dots,p_s]$. Then $z'$ is a descendant of $z$ in the $(u,v)$-Calkin-Wilf tree with root $z$ if and only if the following conditions all hold:
\begin{enumerate}
\item[(a)] $s\geq r$ and $2\mid (s-r)$;
\item[(b)]for $0\leq j\leq s-r-1$, $v\mid p_j$ when $j$ is even and $u\mid p_j$ when $j$ is odd;
\item[(c)] for $2\leq i\leq r$, $p_{s-r+i}=q_i$;
\item[(d)] and
\begin{enumerate}
\item[(i)] if $q_0\neq 0$, then $p_{s-r}\geq q_0$, $v\mid(p_{s-r}- q_0)$ and $p_{s-r+1}=q_1$;
\item[(ii)] otherwise, if $q_0 = 0$, then $v\mid p_{s-r}$, $p_{s-r+1}\geq q_1$, and $u\mid(p_{s-r+1}- q_1)$.
\end{enumerate}
\end{enumerate}
\end{lemma}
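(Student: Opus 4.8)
The plan is to prove the ``only if'' and ``if'' directions separately, in each direction walking along edges of the tree and using Lemma~\ref{cf} as a dictionary between an edge and an operation on the string of partial quotients. Read off that dictionary: a step to a right child replaces $q_0$ by $q_0+v$, or replaces a leading $0$ by $v$ when $q_0=0$; a step to a left child either replaces the leading block $0,q_1$ by $0,u+q_1$ when $q_0=0$, or prepends two new partial quotients $0,u$ when $q_0\neq 0$. Only the last of these changes the length of the string, and then by exactly $2$; this is the source of the parity requirement $2\mid(s-r)$ in part~(a). I would work throughout with the continued fraction convention under which Lemma~\ref{cf} is stated, and call on Lemma~\ref{contineq} in the one or two places where values have to be compared.

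For ``only if'' I would induct on the depth $m$ of $z'$ below the root $z$. The case $z'=z$ is immediate: $s=r$, (a)--(c) hold trivially, and in (d) every divisibility is by $0$. For the inductive step, write $z'$ as a child of a depth-$(m-1)$ vertex $z''$, apply the inductive hypothesis to $z''$, and check that each of the four moves above (left or right child; leading partial quotient of $z''$ zero or not) sends a string satisfying (a)--(d) to another such string. The parity in (a) survives because the only length change is by $2$; the divisibility conditions in (b) and the congruences in (d) are exactly what absorbs a $+v$, a $+u$, or a prepended $u$; and the ``tail'' conditions (c), together with $p_{s-r+1}=q_1$ in (d)(i) (resp.\ $\geq q_1$ in (d)(ii)), are untouched because the moves act only on the first one or two partial quotients. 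A handful of bookkeeping observations are needed along the way, for example that when $q_0\neq 0$ any string satisfying (a)--(d) with leading partial quotient $0$ must already have $s-r\geq 2$, so the ``merge'' reading of a left move applies; and the degenerate cases $r=0$ and $q_0=0$ must be handled on their own.

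For ``if'' I would run the same dictionary backwards, moving \emph{up} the tree from $z'$. The device that forces the induction to terminate is the nonnegative integer
\[
m(z')=\frac{p_{s-r}-q_0}{v}\;+\!\!\sum_{\substack{0\le j\le s-r-1\\ j\text{ even}}}\!\!\frac{p_j}{v}\;+\!\!\sum_{\substack{1\le j\le s-r-1\\ j\text{ odd}}}\!\!\frac{p_j}{u}\;+\;\begin{cases}0,&q_0\neq 0,\\ \dfrac{p_{s-r+1}-q_1}{u},&q_0=0,\end{cases}
\]
each summand of which is an integer by (b) and (d); informally $m(z')$ counts the tree-edges that (a)--(d) force between $z$ and $z'$. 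I would then verify: (i) if $m(z')=0$ then every numerator above vanishes, which forces $s=r$ and hence, via (c)--(d), $z'=z$; (ii) if $z'$ satisfies (a)--(d) and $z'\neq z$, then $z'\notin[1/u,v]$ and so $z'$ has a parent $z''$ in the tree --- the point being that a string satisfying (a)--(d) with $s-r\geq 2$ represents a number that is either smaller than $1/u$ or larger than $v$, so $z'\neq z$ already forces $s=r$, and then (d) forces any such $z'$ lying in $[1/u,v]$ to equal $z$; and (iii) the continued fraction of $z''$ arises from that of $z'$ by the inverse of one of the four moves, which again preserves (a)--(d) and drops $m$ by exactly $1$. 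Strong induction on $m(z')$ then shows that every rational satisfying (a)--(d) is a descendant of $z$.

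The routine bulk is the four-way verification in each direction; the step I expect to be the real obstacle is (iii). The four inverse moves are: decrease the leading partial quotient by $v$ (when $p_0>v$); replace the leading $v$ by $0$ (when $p_0=v$, so the new leading partial quotient is $0$); decrease the second partial quotient by $u$ (when $p_0=0$ and $p_1>u$); and remove the leading block $0,u$ (when $p_0=0$ and $p_1=u$). For each one must confirm that the resulting --- possibly shortened and re-indexed --- string still satisfies (a)--(d) relative to $[q_0,\dots,q_r]$ and that $m$ goes down by $1$. It is precisely in the two length-changing moves, where the index offset appearing in (b)--(d) must be reconciled with the parity condition $2\mid(s-r)$, and once more in the boundary cases $r=0$ and $q_0=0$, that the argument must be carried out with care.
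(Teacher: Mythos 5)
The paper does not prove this lemma at all: it is imported verbatim from \cite[Theorem 3]{HMST1}, so there is no in-paper argument to compare yours against. Judged on its own terms, your plan is sound and is essentially the natural argument one would expect that reference to contain. The forward direction is a routine induction on depth using Lemma~\ref{cf} as the edge-to-string dictionary, and your termination device $m(z')$ for the reverse direction is exactly the depth functional that the paper records separately as Lemma~\ref{cfdepth} (conditions (c) and (d) annihilate all but the $i=0$ and $i=1$ terms of that formula), which is strong evidence you have chosen the right invariant. The deferred case analysis does close up: for instance, when $s=r$ and $p_0=0$, condition (d) forces $q_0=0$, and $m(z')\geq 1$ then forces $p_1\geq q_1+u$, so the merge-inverse is available and produces a legal string; when $s-r\geq 2$, condition (b) guarantees $v\mid p_0$ and $u\mid p_1$, so one of your four inverse moves is always available. (The detour through $[1/u,v]$ in your step (ii) is unnecessary and garbled as written --- ``$z'\neq z$ forces $s=r$'' is not true and not what you need; all that matters is that $m(z')\geq 1$ makes an inverse move applicable.) The one point you should make explicit rather than wave at is the continued fraction normalization: the statement is sensitive to the choice of representation. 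For example, the left child of $z=1=[1]$ is $1/(u+1)$, whose short representation $[0,u+1]$ has $s-r=1$ and violates (a); the lemma only holds for the representation $[0,u,1]$ output by Lemma~\ref{cf}(b). So the biconditional must be read with the representations generated by iterating Lemma~\ref{cf} (equivalently, as an existence statement over representations of $z'$), and your induction should carry that convention along explicitly, particularly in the reverse direction where the representation is handed to you by hypothesis.
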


\begin{lemma}({\cite[Corollary 3]{HMST1}})\label{cfdepth}
Using the same hypothesis as Lemma~\ref{cfterms}, if $n$ is the depth of $z'$, then
\begin{align*}
n & = \frac{1}{v}\Bigg(\sum_{\substack{0\leq j\leq s-r-1\\j\text{ even}}}p_j + \sum_{\substack{0\leq i\leq r\\i \text{ even}}}(p_{s-r+i} - q_i)\Bigg) \\
&\qquad + \frac{1}{u}\Bigg(\sum_{\substack{0\leq j\leq s-r-1\\j \text{ odd}}}p_j + \sum_{\substack{0\leq i\leq r\\i \text{ odd}}}(p_{s-r+i} - q_i)\Bigg).
\end{align*}
\end{lemma}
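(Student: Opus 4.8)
The plan is to prove the identity by induction on the depth $n$ of $z'$, tracking how the continued fraction of a vertex changes along a single edge of the tree by means of Lemma~\ref{cf}. For a vertex $w$ lying on the path from $z$ to $z'$ (so $w$ is $z$ itself or a descendant of $z$), write its continued fraction as $[p_0,p_1,\dots,p_s]$, so that $s\geq r$ and $2\mid(s-r)$ (trivially when $w=z$, and by Lemma~\ref{cfterms}(a) otherwise), and set
\begin{align*}
E(w) &= \sum_{\substack{0\leq j\leq s-r-1\\ j\text{ even}}}p_j + \sum_{\substack{0\leq i\leq r\\ i\text{ even}}}(p_{s-r+i}-q_i),\\
O(w) &= \sum_{\substack{0\leq j\leq s-r-1\\ j\text{ odd}}}p_j + \sum_{\substack{0\leq i\leq r\\ i\text{ odd}}}(p_{s-r+i}-q_i),
\end{align*}
so that the asserted identity is exactly $n=E(z')/v+O(z')/u$. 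Since the depth of $w$ is the number of left edges plus the number of right edges on the path from $z$ to $w$, it suffices to prove the sharper statement that $E(w)$ equals $v$ times the number of right edges on that path, and $O(w)$ equals $u$ times the number of left edges; adding these two relations and dividing gives the lemma.

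The base case $w=z$ has $s=r$, so the first sum in each of $E(z),O(z)$ is empty and the second is $\sum_i(q_i-q_i)=0$, while no edges have been traversed; thus $E(z)=O(z)=0$. For the inductive step, let $w'$ be a child of $w$; we check that passing from $w$ to $w'$ increases exactly one of $E,O$ by exactly $v$ or $u$, matching the change in the corresponding edge count, in the three cases described by Lemma~\ref{cf}. If $w'$ is the right child, then $w'=[p_0+v,p_1,\dots,p_s]$, so only the entry in the (even) position $0$ changes; hence $E$ grows by $v$ and $O$ is unchanged, matching the new right edge. If $w'$ is the left child and $p_0=0$, then $w'=[0,p_1+u,p_2,\dots,p_s]$, so only the entry in the (odd) position $1$ changes; hence $O$ grows by $u$ and $E$ is unchanged, matching the new left edge. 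If $w'$ is the left child and $p_0\neq 0$, then $w'=[0,u,p_0,p_1,\dots,p_s]$: here $s$, and hence $s-r$, grows by $2$; every old entry $p_i$ moves from position $i$ to position $i+2$, preserving its parity and its pairing with $q_i$; and the two new positions $0$ and $1$ carry the values $0$ (even position, contributing nothing to $E$) and $u$ (odd position, contributing $u$ to $O$). A short bookkeeping check then gives $E(w')=E(w)$ and $O(w')=O(w)+u$, again matching the new left edge. Taking $w=z'$ completes the proof.

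Two routine matters need attention in the write-up. First, the continued fraction representations of $z$ and its descendants must be chosen compatibly: one fixes a representation of $z$ and then uses, for each descendant, the representation produced by iterating Lemma~\ref{cf}, which is precisely the one for which conditions (a)--(d) of Lemma~\ref{cfterms} hold, so no ambiguity enters the statement. Second, the degenerate configurations---$s-r=0$, in which the relevant entry lies in the second sum rather than the first; very short expansions; and the distinction between $q_0=0$ and $q_0\neq 0$ at the root---all reduce to the same elementary verification and should be dispatched together. The only genuinely fiddly point is the index shift in the third case above: one must confirm that relabeling every position by $+2$ leaves both inner sums defining $E$ and $O$ unchanged apart from the single new contribution $u$ to $O$. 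That is the heart of the argument, but it is a finite, mechanical check.
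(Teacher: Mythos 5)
Your proposal is correct. Note that the paper itself gives no proof of this lemma --- it is imported verbatim from \cite[Corollary 3]{HMST1} --- so there is no internal argument to compare against; your induction along the root-to-vertex path is a sound, self-contained derivation. The invariant you isolate (that $E(w)$ accumulates exactly $v$ per right edge and $O(w)$ exactly $u$ per left edge) is the right one, and you correctly identify the only two places where care is needed: the boundary case $s=r$, where the incremented coefficient sits in the second sum $\sum_i(p_{s-r+i}-q_i)$ rather than the first, and the left-child case with $p_0\neq 0$, where the prepended pair $[0,u,\dots]$ shifts every old index by $2$ while preserving parity and the pairing with the $q_i$. One small point worth making explicit in a final write-up: since the three operations of Lemma~\ref{cf} never alter the last coefficient of a representation with $r\geq 1$ (and only add $v$ to it when $r=0$), iterating them from the canonical short representation of $z$ produces the canonical short representation of $z'$, so the representation your induction builds is the one to which the hypotheses of Lemma~\ref{cfterms} refer; this closes the ambiguity you flag. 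With that observation added, the argument is complete.
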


The following lemma gives us the desired monotonicity for $A^{(u,v)}(z;n)$ when $z=1/u$ or $z=v$.

\begin{lemma}\label{mono}
	For any $n\geq 0$, if $z=1/u$ or $z=v$, then $S^{(u,v)}(z;n+1)>2S^{(u,v)}(z;n).$
\end{lemma}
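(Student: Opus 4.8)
The plan is to track how the row sum changes when passing from depth $n$ to depth $n+1$ and to show the excess over doubling is strictly positive. Each vertex $a/b$ at depth $n$ contributes two children, $a/(ua+b)$ and $(a+vb)/b$, whose sum is
\[
\frac{a}{ua+b} + \frac{a+vb}{b} = \frac{a}{ua+b} + \frac{a}{b} + v.
\]
Summing over all $a/b \in \mathcal{T}^{(u,v)}(z;n)$ gives
\[
S^{(u,v)}(z;n+1) = S^{(u,v)}(z;n) + v\cdot 2^n + \sum_{a/b\in\mathcal{T}^{(u,v)}(z;n)} \frac{a}{ua+b}.
\]
So proving the lemma reduces to showing
\[
v\cdot 2^n + \sum_{a/b\in\mathcal{T}^{(u,v)}(z;n)} \frac{a}{ua+b} > S^{(u,v)}(z;n),
\]
i.e. that the ``left-child mass'' plus $v\cdot 2^n$ dominates the current row sum. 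I would first handle the base cases $n=0$ (and perhaps $n=1$) by direct computation, since for $z=v$ the root is a positive integer and for $z=1/u$ the root is $1/u$, making the small rows explicit.

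For the inductive step, the natural idea is to pair each vertex $a/b$ at depth $n$ with a specific vertex at depth $n$ whose value relates to $b/a$ — this is the substitute for the missing symmetry of Proposition~\ref{symprop}. Here is where Lemmas~\ref{cfterms} and~\ref{cfdepth} (or a direct argument with Lemma~\ref{cf}) should do the work: for the special roots $z=v=[v]$ and $z=1/u=[0,u]$, the descendants at depth $n$ have continued fractions of a very constrained form, and one can describe an explicit involution (or near-involution) on $\mathcal{T}^{(u,v)}(z;n)$ that sends a vertex with a large integer part to one with a small integer part. Concretely, for $z=v$ every depth-$n$ vertex has the form $[vc_0, uc_1, vc_2, \dots]$ with the parity/divisibility pattern from Lemma~\ref{cfterms}(b), and the depth formula of Lemma~\ref{cfdepth} becomes a weighted sum $\sum c_{2i} + \sum c_{2i+1} = n$ (after clearing $u,v$); reversing or complementing the coefficient string gives another depth-$n$ vertex. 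Using such a pairing, the inequality $v\cdot 2^n + \sum \frac{a}{ua+b} > S^{(u,v)}(z;n)$ follows by comparing the two sides term-by-term along the pairing, exploiting that $\frac{a}{ua+b} = \frac{1}{u + b/a}$ is a decreasing function of $b/a$ together with the strict inequality coming from at least one vertex (e.g. the ``all-right'' vertex $v+nv$ vs.\ the ``all-left'' vertex, whose contributions are wildly asymmetric).

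Alternatively — and this may be cleaner — I would set up a direct recursion. Split $\mathcal{T}^{(u,v)}(z;n+1)$ into left children $L_{n+1}$ and right children $R_{n+1}$ of depth-$n$ vertices. The right children satisfy $\sum_{R_{n+1}} y = S^{(u,v)}(z;n) + v\cdot 2^n$. The left children satisfy $\sum_{L_{n+1}} y = \sum_{a/b\in\mathcal{T}(z;n)} \frac{1}{u+b/a}$, and I would bound this below by first showing that the multiset of values $\{b/a : a/b \in \mathcal{T}^{(u,v)}(z;n)\}$ is "not too large on average" — e.g. that $\sum \frac{1}{u+b/a} \ge c_n > 0$ with $c_n$ explicit — using convexity (Jensen) applied to the convex function $x\mapsto 1/(u+x)$ together with an upper bound on $\sum b/a$, which is itself a row sum of a related $(u,v)$-tree (indeed $b/a$ ranges over vertices of $\mathcal{T}^{(v,u)}$-type trees by the reciprocal symmetry of the recursion). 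Then $S^{(u,v)}(z;n+1) = 2S^{(u,v)}(z;n) + \big(v\cdot 2^n - S^{(u,v)}(z;n) + \sum_{L_{n+1}}y\big)$, and I need the parenthesized quantity positive.

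The main obstacle I anticipate is precisely controlling $\sum_{a/b\in\mathcal{T}(z;n)} \frac{a}{ua+b}$ from below: the heuristic in the introduction suggests this sum is roughly $(\log 2)/u \cdot 2^{n}/1$-ish order $2^n$, which comfortably beats nothing, but the row sum $S^{(u,v)}(z;n)$ grows like $(v + (\log 2)/u)2^n$ too, so the inequality $v\cdot 2^n + \sum\frac{a}{ua+b} > S^{(u,v)}(z;n)$ is asymptotically the statement $v + (\log 2)/u \ge v + (\log 2)/u$ — i.e. it is \emph{tight in the limit}. That means a crude bound will not suffice; the proof must extract the strict inequality from finite-$n$ effects, which is exactly what a carefully chosen involution on the row (giving an exact identity rather than an estimate) would provide, with strictness harvested from the few vertices where the pairing is not self-inverse. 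So I would invest the most effort in constructing that involution via Lemma~\ref{cfterms} and verifying that the induced term-by-term comparison is $\ge$ everywhere and $>$ somewhere.
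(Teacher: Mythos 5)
Your reduction is the same as the paper's first step: writing the new row sum as $S^{(u,v)}(z;n) + v\cdot 2^n + \sum_{a/b}\frac{a}{ua+b}$ and reducing the lemma to $v\cdot 2^n + \sum_{a/b\in\mathcal{T}^{(u,v)}(z;n)}\frac{a}{ua+b} > S^{(u,v)}(z;n)$. Your diagnosis is also exactly right: this inequality is asymptotically an equality, so no averaging or Jensen-type bound can close it (your "alternative" route is a dead end for precisely the reason you state), and what is needed is an exact term-by-term comparison built from continued fractions. But the proposal stops at the point where the actual work begins, and the mechanism you sketch in its place is the wrong one. An involution \emph{within} the row $\mathcal{T}^{(u,v)}(z;n)$ obtained by "reversing or complementing the coefficient string" is not what makes the comparison go through, and it is not clear such a map even preserves the row (coefficient reversal does not respect the divisibility pattern of Lemma~\ref{cfterms} in general, nor the depth formula of Lemma~\ref{cfdepth}).

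The missing idea is a decomposition into integer and fractional parts, followed by a bijection \emph{across} rows rather than within one. Since every left child lies in $(0,1/u)\subseteq(0,1]$ and every right child is its parent plus $v$, the sum of the integer parts of row $n$ is exactly $(2^n-1)v+[z]$ (a one-line induction). Substituting this into your target inequality cancels the bulk $v\cdot 2^n$ against the integer parts and leaves only $[z]+\sum_i\{s_i\} < v+\sum_i t_{2i}$, where the $s_i$ are the row-$n$ vertices and the $t_{2i}$ their left children. This is no longer tight: it is proved by exhibiting an explicit bijection from the fractional parts $\{s_i\}$ (minus one extreme element) onto the left children $t_{2i}$ (minus one extreme element) under which each term strictly increases. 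The bijection is written down coordinate-wise on the continued fraction $[0,\alpha_1u,\alpha_2v,\dots]$ of $\{s_i\}$, whose form is pinned down by Lemma~\ref{cfdepth} for the special roots $z=1/u$ and $z=v$, and the strict increase is verified with Lemma~\ref{contineq}. Without the integer-part computation and this cross-row pairing, your outline does not yet constitute a proof.
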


\begin{proof}
	Let $n\geq 0$ be given. Enumerate the elements in $\mathcal{T}^{(u,v)}(z;n)$ and $\mathcal{T}^{(u,v)}(z;n+1)$ as they appear from left to right in the $(u,v)$-Calkin-Wilf tree by $s_0,s_1,\dots,s_{2^n-1}$ and $t_0,t_1,\dots,t_{2^{n+1}-1}$, respectively. Clearly, for $0\leq i\leq 2^n-1$, $t_{2i}$ and $t_{2i+1}$ are the left and right children of $s_i$. Our goal is therefore to show that $$2\sum_{i=0}^{2^n-1} s_i < \sum_{i=0}^{2^{n+1}-1} t_i.$$ This desired inequality can be reduced further by noting that $t_{2i+1} = s_i + v$. In other words, we obtain the desired result if we can show that $$\sum_{i=0}^{2^n-1}s_i<2^nv + \sum_{i=0}^{2^n-1}t_{2i}.$$ Let $\mathcal{I}_n =\sum_{i=0}^{2^n-1}[s_i]$. That is, $\mathcal{I}_n$ is the sum of the integer parts of all of the depth $n$ elements of the $(u,v)$-Calkin-Wilf tree.

	\underline{Claim:} $\mathcal{I}_n = (2^n-1)v+[w]$ for $n\geq 0$.

	We prove the above claim by induction. Clearly $\mathcal{I}_0 = [w]$. Suppose that the claim holds for some $k\geq 1$. Since the left child of any number appearing in the $(u,v)$-Calkin-Wilf tree is smaller than $1/u$ and the right child of any element is always the original element plus $v$, it follows that $\mathcal{I}_{k+1} = \mathcal{I}_k+2^kv$. By assumption, $\mathcal{I}_k=(2^k-1)v+[w]$, from which the desired result immediately follows.
	
	Our previous claim shows that we obtain the desired result if we can show that 
	\begin{align}
	[w]+\sum_{i=0}^{2^n-1}\{s_i\} &< v+\sum_{i=0}^{2^n-1}t_{2i}.\label{mainineq}
    \end{align}	
	
	If we take $w=1/u$, then $[w]=0$ and, by Lemma~\ref{cfdepth}, the short continued fraction representation of $\{s_i\}$ must be of the form $[0,\alpha_1 u,\alpha_2 v,\dots,\alpha_k u]$ with $m:=m(s_i)=n+2-\sum_{i=1}^k \alpha_i >0 .$ Since $\{s_{2^n-1}\}=[0,u]$ and $t_0=[0,(n+2)u]$, we see that, in this case,~\eqref{mainineq} reduces further to the inequality
	\begin{align}
	\sum_{i=0}^{2^n-2}\{s_i\} &< \sum_{i=1}^{2^n-1}t_{2i}.\label{reducedmainineq}
    \end{align}
    If $\alpha_k=1$, then there is an $1\leq i^*\leq 2^n-1$ such that $$t_{2i^*}=[0,\alpha_1 u,\alpha_2 v,\dots,(\alpha_{k-1}+1)v, mu].$$ If $\alpha_k> 1$, then there is an $1\leq i^*\leq 2^n-1$ such that $$t_{2i^*}=[0,\alpha_1 u,\alpha_2 v,\dots,(\alpha_k-1)u,v,mu].$$ In either case, it follows that $\{s_i\}<t_{2i^*}$ by Lemma~\ref{contineq}. Note that the above association between $\left\{\{s_i\}\right\}_{i=0}^{2^n-2}$ and $\{t_{2i}\}_{i=1}^{2^n-1}$ is bijective, from which~\eqref{mainineq} follows in this case.
    
    If we take $w=v$, then $[w]=v$ and, by Lemma~\ref{cfdepth}, the short continued fraction representation of $\{s_i\}$ must be of the form $[0,\alpha_1 u,\alpha_2 v,\dots,\alpha_k v]$ with $m$ defined as in the previous case. Since $\{s_{2^n-1}\}=0$ and $t_0=[0,(n+1)u,v]$, we see that, in this case,~\eqref{mainineq} also reduces to~\eqref{reducedmainineq}. If $m=1$, then there is an $1\leq i^*\leq 2^n-1$ such that $$t_{2i^*}=[0,\alpha_1 u,\alpha_2 v,\dots,(\alpha_k+1)v].$$ If $m>1$, then there is an $1\leq i^*\leq 2^n-1$ such that $$t_{2i^*}=[0,\alpha_1 u,\alpha_2 v,\dots,\alpha_kv,(m-1)u,v].$$ As in the previous case,~\eqref{mainineq} follows, completing the proof of the lemma.
     
\end{proof}

The following theorem establishes $v+\log{2}/u$ as an  upper bound of $A^{(u,v)}(z;n)$. Note that by $f(x)=O(g(x))$ we mean that $|f(x)|\leq C|g(x)|$ for some constant $C$ (which may differ depending on context) and all sufficiently large $x$.

\begin{theorem}\label{mainthm}
	If $u$ and $v$ are positive integers with $uv>1$ and $z\in\mathbb{Q}$, then $A^{(u,v)}(z;n)$ is bounded above for all $n\geq 0$. In particular, $$v+\frac{\log{2}}{u}-\lim_{n\to\infty}A^{(u,v)}(z;n)= O\bigg(\frac{1}{u^2v}\bigg).$$% for some absolute constant $C$.
 \end{theorem}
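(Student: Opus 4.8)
The plan is to estimate $A^{(u,v)}(v;n)$ (and $A^{(u,v)}(1/u;n)$) directly by organizing the depth-$n$ vertices according to the structure of their fractional parts' continued fractions, exactly as the heuristic suggests, and then control the error incurred when we replace each fractional part $\{s_i\}=[0,\beta_1 u,\beta_2 v,\ldots]$ by its leading approximation $1/(\beta_1 u)$. Taking $w=v$ so that the fractional parts have short continued fractions of the form $[0,\alpha_1 u,\alpha_2 v,\ldots,\alpha_k v]$, I will first recall from Lemma~\ref{mono}'s proof that the integer-part contribution to $S^{(u,v)}(v;n)$ is $(2^n-1)v+v$, so that $A^{(u,v)}(v;n)=v-v/2^n+2^{-n}\sum_{i}\{s_i\}$ and it suffices to analyze $2^{-n}\sum_i\{s_i\}$. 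Combining this with the monotonicity from Lemma~\ref{mono}, the limit exists, and the whole problem reduces to pinning down $\lim_n 2^{-n}\sum_{i=0}^{2^n-1}\{s_i\}$ to within $O(1/(u^2 v))$ and showing it equals $\log 2/u + O(1/(u^2v))$.

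Next I would set up the bookkeeping: by Lemma~\ref{cf} and Lemma~\ref{cfdepth}, a depth-$n$ vertex with root $v$ has fractional part whose short continued fraction $[0,\alpha_1 u,\alpha_2 v,\ldots,\alpha_k v]$ (with leading block $\alpha_1 u$) contributes to the depth count a term $\alpha_1$ from that first coefficient. The number of depth-$n$ vertices whose fractional part begins $[0,\alpha_1 u,\ldots]$ — equivalently, whose fractional part lies in the interval forced by that first coefficient — is governed by how many ways the remaining budget $n-\alpha_1$ (roughly) can be completed; for $uv>1$ this count, divided by $2^n$, tends to a limiting proportion. The cleanest route is to bound $\{s_i\}$ between $\frac{1}{\alpha_1 u + 1}$ and $\frac{1}{\alpha_1 u}$ (since $[0,\alpha_1 u+1]\le [0,\alpha_1 u,\ldots]\le [0,\alpha_1 u]$ by Lemma~\ref{contineq}, with the appropriate parity), so $\{s_i\} = \frac{1}{\alpha_1 u}+O\!\big(\frac{1}{(\alpha_1 u)^2}\big) = \frac{1}{\alpha_1 u}+O\!\big(\frac{1}{u^2}\big)$ termwise when $\alpha_1=1$ and smaller otherwise. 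Weighting $\frac{1}{\alpha_1 u}$ by the limiting proportion of depth-$n$ vertices with leading coefficient $\alpha_1 u$ — which I will show is $2^{-\alpha_1}$ up to a correction that is itself summable and contributes $O(1/(uv))$ or better to the final mean — recovers $\frac{1}{u}\sum_{\alpha_1\ge 1}\frac{2^{-\alpha_1}}{\alpha_1} = \frac{\log 2}{u}$, and all the discarded pieces aggregate to $O(1/(u^2 v))$. (The factor $1/v$ in the error comes from the fact that the correction to the naive proportion $2^{-\alpha_1}$ scales like $1/v$: when $v$ is large, the "budget" in the $v$-slots is coarse, making the $v$-dependence of the counts lower order.)

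The main obstacle — and the step requiring genuine care rather than routine estimation — is making precise the claim that the proportion of depth-$n$ vertices (with root $v$) whose fractional part has leading continued-fraction coefficient equal to $\alpha_1 u$ converges, as $n\to\infty$, to $2^{-\alpha_1}$ with an error of size $O(1/v)$ uniformly in $\alpha_1$, and summable in $\alpha_1$. This is essentially a counting problem: by Lemma~\ref{cfdepth}, depth-$n$ vertices correspond to compositions/tuples $(\alpha_1,\alpha_2,\ldots,\alpha_k)$ with a weighted sum constraint (the odd-indexed entries contribute $\alpha_i u / u = \alpha_i$ and even-indexed contribute $\alpha_i v / v = \alpha_i$ to the depth, plus the boundary corrections), and I need to count, for each target leading value $\alpha_1$, the number of completions and compare with $2^{n-\alpha_1}$. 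I expect this to follow from a generating-function or direct recursive argument: the count of depth-$n$ vertices is exactly $2^n$ (the tree is binary), and conditioning on the first step splits off a factor related to whether the first branch is a left move (advancing $\alpha_1$) or a right move, yielding a renewal-type recursion whose solution is $2^{-\alpha_1}$ in the limit. The delicate part is tracking that the $uv>1$ hypothesis is what makes the remaining mass spread out enough for the limit to exist (when $uv=1$ the structure degenerates, which is why that case is handled separately in Theorem~\ref{11mean}), and that the deviation from the idealized geometric proportions is controlled by $1/v$ rather than merely $o(1)$. Once this counting lemma is in hand, assembling the bound is a matter of summing the two absolutely convergent series against their weights and collecting the error terms, which I would present as a short final computation.
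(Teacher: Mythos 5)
Your strategy is genuinely different from the paper's. The paper unrolls the row-sum recursion $S(n)=S(n-1)+2^{n-1}v+\sum_{y\in\mathcal{T}(n-1)}\frac{1}{u+1/y}$ and then repeatedly peels the left-child sum into a main term, estimated by a geometric series (this is where $uv>1$ enters, via $\frac{1}{u(v+y)}\le\frac{1}{uv}\le\frac12$), plus a deeper sum; the constant $\log 2/u$ emerges from $\sum_i\frac{1}{i2^i}$ and the error $O(1/(u^2v))$ from the factor $1+O(1/(uv))$ multiplying $1/u$. You instead propose to classify the depth-$n$ fractional parts by the leading coefficient $\alpha_1 u$ of their continued fractions and average. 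That route can be made to work, and the counting step you single out as the main obstacle is in fact much easier than you fear: a depth-$n$ vertex of $\mathcal{T}^{(u,v)}(v)$ has leading coefficient $\alpha_1 u$ with $\alpha_1=a$ exactly when its left/right path, after stripping trailing right-moves, ends in a maximal block of exactly $a$ left-moves, and a direct count gives $2^{n-a}$ such vertices up to $O(1)$ boundary terms from paths whose left-block reaches the root. So the proportion is $2^{-a}+O(2^{-n})$ for every $u,v$ --- exactly geometric, with \emph{no} $O(1/v)$ correction. No generating functions or renewal argument is needed, and $uv>1$ plays no role in the existence of these proportions.

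The genuine gap is in your error accounting. Your termwise bracketing $\frac{1}{\alpha_1 u+1}\le\{s_i\}\le\frac{1}{\alpha_1 u}$ gives a termwise error of order $\frac{1}{(\alpha_1 u)^2}$, and weighting by $2^{-\alpha_1}$ then yields only $O(1/u^2)$, which is strictly weaker than the claimed $O(1/(u^2v))$ when $v$ is large. You propose to recover the missing factor $1/v$ from a $1/v$-sized correction to the proportions $2^{-\alpha_1}$, but as noted above no such correction exists, so the argument as written does not deliver the stated bound. The factor $1/v$ has to come from the termwise approximation itself: by Lemma~\ref{cfterms} the second coefficient of $\{s_i\}=[0,\alpha_1u,\alpha_2v,\dots]$ is a positive multiple of $v$, so the tail $t=[0,\alpha_2v,\dots]$ satisfies $0\le t\le 1/v$, whence $0\le \frac{1}{\alpha_1 u}-\{s_i\}=\frac{t}{\alpha_1 u(\alpha_1 u+t)}\le\frac{1}{v(\alpha_1 u)^2}$; summing this against $2^{-\alpha_1}$ gives the required $O(1/(u^2v))$ (and shows the limit is indeed \emph{below} $v+\log 2/u$, matching the paper's remark). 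Two smaller points: your approach establishes the limit only for $z=1/u$ or $z=v$ (via Lemma~\ref{mono} plus boundedness) and would need Proposition~\ref{samelim} to reach other roots, whereas the paper's telescoping proof gives the limit for every $z\in\mathbb{Q}$ directly; and since $\mathcal{I}_n=(2^n-1)v+[v]=2^nv$, the integer-part contribution to $A^{(u,v)}(v;n)$ is exactly $v$, not $v-v/2^n$ (harmless in the limit, but worth fixing).
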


\begin{proof}
For brevity, we let $S(n):=S^{(u,v)}(z;n)$, $A(n):=A^{(u,v)}(z;n)$, and $\mathcal{T}(n):=\mathcal{T}^{(u,v)}(z;n)$.

For $n\geq 1$, every rational number in the set $\mathcal{T}(n)$ is either the left-child or right-child of a rational number in the set $\mathcal{T}(n-1)$. In particular, for every $y\in \mathcal{T}(n-1)$, there is a unique $x\in \mathcal{T}(n)$ that is \emph{the} right-child $y$. By definition, $x=y+v$. Likewise, there is a unique $z\in \mathcal{T}(n)$ that is \emph{the} left-child $y$, making $z=\frac{1}{u+\frac{1}{y}}$. It follows that
\begin{align}
S(n) &= S(n-1) + 2^{n-1}v + \sum_{y\in \mathcal{T}(n-1)}\frac{1}{u+\frac{1}{y}}.\label{ssum}
\end{align}
By dividing both sides of \eqref{ssum} by $2^n$, we immediately obtain the equality
\begin{align}
A(n) &= \frac{1}{2}A(n-1) + \frac{v}{2} + \frac{1}{2^n}\sum_{y\in \mathcal{T}(n-1)}\frac{1}{u+\frac{1}{y}}.\label{stildesum}
\end{align}
By induction on \eqref{stildesum}, we can express $A(n)$ as
\begin{align}
A(n) &= \frac{1}{2^n}A(0)+v\sum_{k=1}^n\frac{1}{2^k}+\frac{1}{2^n}\sum_{k=1}^n\sum_{y\in \mathcal{T}(n-k)}\frac{1}{u+\frac{1}{y}}\notag\\
&= \frac{z}{2^n}+v\bigg(1-\frac{1}{2^n}\bigg)+\frac{1}{2^n}\sum_{k=1}^n\sum_{y\in \mathcal{T}(n-k)}\frac{1}{u+\frac{1}{y}}\label{sfullformula}
\end{align}
Taking the limit as $n\to\infty$ of both sides of \eqref{sfullformula} shows that, to complete the proof, it is enough to prove that
\begin{align}
\lim_{n\to\infty}\frac{1}{2^n}\sum_{k=1}^n\sum_{y\in \mathcal{T}(n-k)}\frac{1}{u+\frac{1}{y}} &= \frac{\log{2}}{u}+O\bigg(\frac{1}{u^2v}\bigg).\label{usum}
\end{align}

Let $m=\lfloor n/2\rfloor$. We split the double sum in \eqref{usum} into two parts,
\begin{align}
\sum_{k=1}^n\sum_{y\in \mathcal{T}(n-k)}\frac{1}{u+\frac{1}{y}} &= \sum_{k=1}^{m}\sum_{y\in \mathcal{T}(n-k)}\frac{1}{u+\frac{1}{y}} + \sum_{k=m+1}^n\sum_{y\in \mathcal{T}(n-k)}\frac{1}{u+\frac{1}{y}}.\label{2sumsplit}
\end{align}
For $m<k\leq n$, we apply the following simple upper bound in \eqref{2sumsplit},
\begin{align*}
\sum_{y\in \mathcal{T}(n-k)}\frac{1}{u+\frac{1}{y}} &\leq \frac{2^{n-k}}{u}.
\end{align*}
It follows that
\begin{align}
\sum_{k=m+1}^n\sum_{y\in \mathcal{T}(n-k)}\frac{1}{u+\frac{1}{y}} &\leq \sum_{k=m+1}^n\frac{2^{n-k}}{u}\notag\\
&= \frac{1}{u}\sum_{i=0}^{n-(m+1)} 2^i\notag\\
&= \frac{2^{n-m}-1}{u}.\label{simpleupbound}
\end{align}
Since $m\to\infty$ as $n\to\infty$, if we apply \eqref{simpleupbound} to \eqref{2sumsplit}, then, by \eqref{usum}, we have reduced the problem to showing that
\begin{align}
\lim_{n\to\infty}\frac{1}{2^n}\sum_{k=1}^m\sum_{y\in \mathcal{T}(n-k)}\frac{1}{u+\frac{1}{y}} &= \frac{\log{2}}{u}+O\bigg(\frac{1}{u^2v}\bigg).\label{finalform}
\end{align}

Using the same reasoning on the sum $\displaystyle\sum_{y\in \mathcal{T}(n-k)}\frac{1}{u+\frac{1}{y}}$ that led to \eqref{ssum}, we see that, for $n-k>2$,
\begin{align}
\sum_{y\in \mathcal{T}(n-k)}\frac{1}{u+\frac{1}{y}} &= \sum_{y\in \mathcal{T}(n-(k+1))}\frac{1}{2u+\frac{1}{y}} + \sum_{y\in \mathcal{T}(n-(k+1))}\frac{1}{u+\frac{1}{v+y}}.\label{splitusum}
\end{align}
We convert the rightmost sum on the right-hand side of \eqref{splitusum} into a sum of geometric series,
\begin{align}
\sum_{y\in \mathcal{T}(n-(k+1))}\frac{1}{u+\frac{1}{v+y}} &= \frac{1}{u}\sum_{y\in \mathcal{T}(n-(k+1))}\frac{1}{1+\frac{1}{u(v+y)}}\notag\\
&= \frac{1}{u}\sum_{y\in \mathcal{T}(n-(k+1))}\sum_{j=0}^\infty\Bigg(\frac{-1}{u(v+y)}\Bigg)^j.\label{ugeomsum}
\end{align}
The justification for \eqref{ugeomsum} follows from the fact that $0<\frac{1}{u(v+y)}\leq \frac{1}{uv}\leq\frac{1}{2}$ for \emph{any} positive rational $y$. So
\begin{align}
\sum_{y\in \mathcal{T}(n-(k+1))}\frac{1}{u+\frac{1}{v+y}} &= \frac{1}{u}\sum_{y\in \mathcal{T}(n-(k+1))}\Bigg(1+O\bigg(\frac{1}{uv}\bigg)\Bigg)\notag\\
&= \frac{2^{n-(k+2)}}{u}\Bigg(1+O\bigg(\frac{1}{uv}\bigg)\Bigg)\label{firstestimate}
\end{align}
Combining \eqref{firstestimate} with \eqref{splitusum}, we see that
\begin{align*}
\sum_{y\in \mathcal{T}(n-k)}\frac{1}{u+\frac{1}{y}} &= \sum_{y\in \mathcal{T}(n-(k+1))}\frac{1}{2u+\frac{1}{y}} + \frac{2^{n-(k+2)}}{u}\Bigg(1+O\bigg(\frac{1}{uv}\bigg)\Bigg).
\end{align*}
We can now repeat all of the above steps starting from \eqref{splitusum} with the sum $$\sum_{y\in \mathcal{T}(n-(k+1))}\frac{1}{2u+\frac{1}{y}}.$$ Inductively, for any positive integer $j<n-k$, it follows that
\begin{align}
\sum_{y\in \mathcal{T}(n-k)}\frac{1}{u+\frac{1}{y}} &= \sum_{y\in \mathcal{T}(n-(k+j))}\frac{1}{(j+1)u+\frac{1}{y}} + \sum_{i=1}^j\frac{2^{n-(k+i+1)}}{iu}\Bigg(1+O\bigg(\frac{1}{uv}\bigg)\Bigg)\label{msum}
\end{align}
where the constant associated with the big-oh term is uniform for all of the sums.

Let $m'=\lfloor n/4\rfloor$. Then, from \eqref{msum}, for $1\leq k\leq m$,
\begin{align}
\sum_{y\in \mathcal{T}(n-k)}\frac{1}{u+\frac{1}{y}} &= \sum_{y\in \mathcal{T}(n-(k+m'))}\frac{1}{(m'+1)u+\frac{1}{y}} + \sum_{i=1}^{m'}\frac{2^{n-(k+i+1)}}{iu}\Bigg(1+O\bigg(\frac{1}{uv}\bigg)\Bigg)\notag\\
&= O\Bigg(\frac{2^{n-(k+m'+1)}}{(m'+1)u}\Bigg)+\sum_{i=1}^{m'}\frac{2^{n-(k+i+1)}}{iu}\Bigg(1+O\bigg(\frac{1}{uv}\bigg)\Bigg).\label{mprimebound}
\end{align}
(Note that for $n$ sufficiently large, since $k\leq m$, then $k+m'\leq 3n/4$, so $n-(k+m')\geq 1$. In particular, we can apply \eqref{msum} with $j=m'$.)

Using the Taylor series expansion of $\log(1-x)$ for $|x|<1$, we see that
\begin{align}
\sum_{i=1}^{m'}\frac{2^{n-(k+i+1)}}{iu} &= \frac{2^{n-(k+1)}}{u}\sum_{i=1}^{m'}\frac{1}{i2^i}\notag\\
&= \frac{2^{n-(k+1)}}{u}\bigg(\log{2}-\sum_{i>m'}\frac{1}{i2^i}\bigg).\label{logapprox}
\end{align}
Combining \eqref{mprimebound} and \eqref{logapprox} with the double sum from \eqref{finalform}, it follows that
\begin{align}
&\quad\dfrac{1}{2^{n-1}}\sum_{k=1}^m\sum_{y\in \mathcal{T}(n-k)}\frac{1}{u+\frac{1}{y}} \nonumber\\
&= \dfrac{1}{u}\displaystyle\sum_{k=1}^m\frac{1}{2^k}\bigg(\log{2}-\sum_{i>m'}\frac{1}{i2^i}\bigg)\Bigg(1+O\bigg(\frac{1}{uv}\bigg)\Bigg)+O\bigg(\frac{1}{(m'+1)u}\bigg)\label{lastform}
\end{align}
The result \eqref{finalform} now follows from taking the limit of \eqref{lastform} as $n\to\infty$.  
\end{proof}

Lemma~\ref{mono} and Theorem~\ref{mainthm} immediately give (A). To show (B), we give a crude estimate of the difference between two rational numbers based on their short continued fraction representations.

\begin{lemma}\label{bound}
Suppose that $\alpha,\beta\in\mathbb{Q}$ are distinct with $\alpha=[p_0,p_1,\dots,p_s]$ and $\beta=[q_0,q_1,\dots,q_r]$. Let $k$ be the largest index such that $p_k= q_k$. Then $$|\alpha-\beta|\leq\prod_{j=1}^{k}\frac{1}{p_j^2}.$$
\end{lemma}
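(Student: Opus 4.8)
The plan is to use the standard convergent machinery for continued fractions. Write $\alpha=[p_0,p_1,\dots,p_s]$ and recall that the convergents $P_j/Q_j$ of $\alpha$ satisfy the recurrences $P_j=p_jP_{j-1}+P_{j-2}$ and $Q_j=p_jQ_{j-1}+Q_{j-2}$ with the usual initial conditions, so that $\alpha=P_s/Q_s$. The same recurrences applied to $[q_0,q_1,\dots,q_r]$ produce convergents $P'_j/Q'_j$, and since $p_j=q_j$ for all $j\le k$, the first $k+1$ convergents agree: $P_j=P'_j$ and $Q_j=Q'_j$ for $0\le j\le k$. Now $\alpha=[p_0,\dots,p_k,\theta]$ where $\theta=[p_{k+1},\dots,p_s]$ (or $\theta=p_{k+1}$ if $k+1=s$; if $k=s$ then $\alpha$ is itself the $k$-th convergent, and one handles this boundary case separately, noting $\alpha=\beta$ would follow unless the tails differ in length, in which case a similar formula applies), so by the standard formula $\alpha=\dfrac{\theta P_k+P_{k-1}}{\theta Q_k+Q_{k-1}}$, and likewise $\beta=\dfrac{\theta' P_k+P_{k-1}}{\theta' Q_k+Q_{k-1}}$ with $\theta'=[q_{k+1},\dots,q_r]$.

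Next I would subtract these two expressions over a common denominator. Using the identity $P_kQ_{k-1}-P_{k-1}Q_k=(-1)^{k-1}$, the numerator of $\alpha-\beta$ collapses to $\pm(\theta-\theta')$, giving
\begin{align}
|\alpha-\beta| &= \frac{|\theta-\theta'|}{(\theta Q_k+Q_{k-1})(\theta' Q_k+Q_{k-1})}.\notag
\end{align}
Since $\theta,\theta'\ge 1$ and $Q_{k-1}\ge 0$, the denominator is at least $Q_k^2$. For the numerator, $\theta$ and $\theta'$ are two distinct real numbers each lying in an interval of the form $[p_{k+1},p_{k+1}+1)$ only if their first tail coefficients agreed — but by the choice of $k$ as the \emph{largest} common index, the next coefficients differ (or one tail terminates), so $\theta$ and $\theta'$ both lie in $[1,\infty)$ and one checks $|\theta-\theta'|<1$: indeed writing $\theta=p_{k+1}+1/\theta_2$ and $\theta'=q_{k+1}+1/\theta'_2$, if $p_{k+1}\neq q_{k+1}$ the two integer parts differ so $|\theta-\theta'|$ could exceed $1$ — so instead I should bound more carefully, peeling one more level: $|\theta-\theta'|\le 1$ is not quite automatic, but $|\alpha - \beta| = |\theta - \theta'|/(\cdots) $ can be re-expressed by instead taking the common prefix to be $[p_0,\dots,p_k]$ and observing $\alpha,\beta\in[P_k/Q_k \text{ side}]$... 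Cleaner: use that $\alpha$ lies strictly between the convergent $P_k/Q_k$ and the next convergent, hence $|\alpha - P_k/Q_k|\le 1/(Q_kQ_{k+1})\le 1/Q_k^2$ and similarly for $\beta$, so by the triangle inequality $|\alpha-\beta|\le 2/Q_k^2$. To get the sharper stated bound without the factor $2$, note $\alpha$ and $\beta$ lie on the \emph{same} side of $P_k/Q_k$ (both equal $P_k/Q_k$ plus a quantity of sign $(-1)^k/(Q_k(\text{tail}))$), so the difference is bounded by the larger of the two, namely $\le 1/Q_k^2$.

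Finally I would bound $Q_k$ from below by the product of the partial quotients: from $Q_j=p_jQ_{j-1}+Q_{j-2}\ge p_jQ_{j-1}$ for $j\ge 1$, together with $Q_0=1$, induction gives $Q_k\ge\prod_{j=1}^k p_j$, hence $1/Q_k^2\le\prod_{j=1}^k 1/p_j^2$, which is the claim. The main obstacle is the boundary bookkeeping: handling the case $k=s$ or $k=r$ (one continued fraction being a prefix of the other), making sure the "same side of $P_k/Q_k$" claim is stated correctly so that the constant is $1$ rather than $2$, and confirming that $|\theta-\theta'|$ divided by the denominator really is at most $1/Q_k^2$ in all parity cases; none of this is deep, but it is where a careless argument would slip.
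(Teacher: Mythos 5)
Your argument is correct in substance (the boundary cases you flag---$k=s$ or $k=r$, and the same-side claim---do all work out), but it takes a genuinely different route from the paper. The paper never introduces convergents: it peels off the common prefix one coefficient at a time, using $\bigl|\tfrac{1}{p_j+A_j}-\tfrac{1}{p_j+B_j}\bigr|=\tfrac{|A_j-B_j|}{(p_j+A_j)(p_j+B_j)}\le\tfrac{1}{p_j^{2}}\,|A_j-B_j|$ at each of the levels $j=1,\dots,k$, and then bounds the leftover difference of two numbers in $(0,1)$ crudely by $1$, so that the product $\prod_{j=1}^{k}p_j^{-2}$ accumulates telescopically. You instead run the standard convergent machinery: the recurrences for $P_j,Q_j$, the determinant identity, the observation that $\alpha$ and $\beta$ both differ from $P_k/Q_k$ by a quantity of sign $(-1)^k$ and magnitude at most $1/Q_k^{2}$ (since the tails $\theta,\theta'\ge 1$), and finally $Q_k\ge\prod_{j=1}^{k}p_j$. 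Your version buys a sharper intermediate estimate, $|\alpha-\beta|\le 1/Q_k^{2}$, of which the stated bound is a weakening; the cost is the extra care needed to get the constant $1$ rather than $2$ (the same-side argument) and the prefix bookkeeping. The paper's telescoping is more elementary and self-contained, at the price of proving only the weaker product bound---which is all that is needed for Corollary~\ref{corbound}.
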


\begin{proof}
We rewrite the continued fraction representations of $\alpha$ and $\beta$ as $$\alpha=[p_0,p_1,\dots,p_k,p_{k+1},\dots,p_s] \quad\text{and}\quad  \beta=[p_0,p_1,\dots,p_k,q_{k+1},\dots,q_r].$$ (Note that we cannot have $k=r=s$ and that if $k=r$ or $k=s$, the estimates below still apply.) Now, for $A_i=[p_i,\dots,p_s]$ and $B_i=[q_i,\dots,q_r]$ with $1\leq i\leq k+1$,
\begin{align*}
    |\alpha-\beta| &= \left|p_0+\dfrac{1}{p_1+A_1}-p_0-\dfrac{1}{p_1+B_1}\right|\\
    &= \left|\dfrac{1}{p_1+A_1}-\dfrac{1}{p_1+B_1}\right|\\
    &\leq  \left|p_1+\dfrac{1}{p_2+A_2}-p_1-\dfrac{1}{p_2+B_2}\right|\cdot\frac{1}{p_1^2}\\
    &\hspace{3cm}\vdots\\
    &\leq \left|\dfrac{1}{p_{k+1}+A_{k+1}}-\dfrac{1}{q_{k+1}+B_{k+1}}\right|\cdot\prod_{j=1}^{k}\frac{1}{p_j^2}\\
    &\leq \prod_{j=1}^{k}\frac{1}{p_j^2}. 
\end{align*}  
\end{proof}

In the case where the rationals from Lemma~\ref{bound} are vertices of possibly two different $(u,v)$-Calkin-Wilf trees, we get the following corollary.

\begin{corollary}\label{corbound}
With $\alpha$ and $\beta$ as in Lemma~\ref{bound} and, additionally, suppose that $\alpha$ and $\beta$ are vertices of possibly two different $(u,v)$-Calkin-Wilf trees, then
$$\alpha-\beta= O\Bigg(\frac{\max\{u,v\}}{2^k}\Bigg).$$
\end{corollary}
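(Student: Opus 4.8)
The plan is to deduce the estimate from Lemma~\ref{bound} after first replacing its bound $\prod_{j=1}^{k}p_j^{-2}$ with the related quantity $2/k_k^{2}$, where $k_k$ is the denominator of the convergent $[p_0,\dots,p_k]$ of the common initial block of $\alpha$ and $\beta$. Since $k_k\geq\prod_{j=1}^{k}p_j$ (from the convergent recurrence), this quantity is at most twice the bound of Lemma~\ref{bound}, but it is far easier to handle because denominators of convergents grow exponentially regardless of how small the partial quotients are. In fact this route does not even use the Calkin--Wilf structure and yields the stronger estimate $\alpha-\beta=O(2^{-k})$, which implies the asserted $O(\max\{u,v\}/2^k)$.

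First I would record the underlying continued fraction identity in quantitative form. Write $\alpha=[p_0,\dots,p_k,p_{k+1},\dots,p_s]$ and $\beta=[p_0,\dots,p_k,q_{k+1},\dots,q_r]$, let $h_i/k_i$ be the convergents of the common block $[p_0,\dots,p_k]$, and set $A=[p_{k+1},\dots,p_s]\geq 1$ and $B=[q_{k+1},\dots,q_r]\geq 1$ (if one tail is empty, the computation only improves). Then $\alpha=(Ah_k+h_{k-1})/(Ak_k+k_{k-1})$ and $\beta=(Bh_k+h_{k-1})/(Bk_k+k_{k-1})$, and using $h_kk_{k-1}-h_{k-1}k_k=\pm 1$ one obtains
\[
|\alpha-\beta|=\frac{|A-B|}{(Ak_k+k_{k-1})(Bk_k+k_{k-1})}\leq\frac{2}{k_k^{2}},
\]
where I have used $|A-B|\leq A+B\leq 2AB$ (valid since $A,B\geq 1$) together with $Ak_k+k_{k-1}\geq Ak_k$ and $Bk_k+k_{k-1}\geq Bk_k$. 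Next I would bound $k_k$ from below: since every partial quotient is at least $1$, the recurrence $k_i=p_ik_{i-1}+k_{i-2}$ gives $k_i\geq k_{i-1}+k_{i-2}$ for $i\geq 2$, whence $k_k\geq F_k\geq \phi^{\,k-1}$ with $\phi=(1+\sqrt 5)/2$ and $F_0=F_1=1$. As $\phi^{2}=(3+\sqrt 5)/2>2$, this gives $|\alpha-\beta|\leq 2\phi^{2}(\phi^{2})^{-k}<2\phi^{2}\cdot 2^{-k}$, completing the argument.

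If one instead prefers to stay with Lemma~\ref{bound} as literally stated and accept the stated (weaker) bound, the alternative is to lower-bound the partial quotients in the common block via part (b) of Lemma~\ref{cfterms}: apart from a block at the end coming from the root, an odd-indexed partial quotient of a vertex is a positive multiple of $u$ and an even-indexed one a positive multiple of $v$, so when $uv>1$ at least one of $u,v$ is $\geq 2$ and at least about $k/2$ of $p_1,\dots,p_k$ are $\geq 2$, forcing $\prod_{j=1}^{k}p_j^{-2}=O(\max\{u,v\}/2^k)$. The main obstacle in this second route is bookkeeping: one has to check that only a bounded number of indices $1\leq j\leq k$ can lie inside the root block of \emph{both} $\alpha$ and $\beta$, and one must dispose of the degenerate case $u=v=1$ separately, where the coefficient lower bounds are vacuous. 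The convergent-denominator argument sidesteps all of this, and the only care it requires is with the trivial edge cases $k=s$, $k=r$, or $k=0$, each of which only makes $|\alpha-\beta|\leq 2/k_k^{2}$ easier to establish; for this reason I would adopt it.
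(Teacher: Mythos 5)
Your convergent-denominator argument is correct, but it is a genuinely different route from the one the paper takes. The paper's proof is a one-line deduction from Lemma~\ref{bound}: by Lemma~\ref{cfterms}, the partial quotients of a vertex are multiples of $v$ at even indices and of $u$ at odd indices, so (when $uv>1$) at least every other factor $1/p_j^2$ in $\prod_{j=1}^{k}1/p_j^2$ is at most $1/4$, which gives roughly $2^{-k}$ with the factor $\max\{u,v\}$ absorbing the parity slack. This is precisely your ``second route,'' and the reservations you raise about it are well founded: Lemma~\ref{cfterms} only forces divisibility for the coefficients with index at most $s-r-1$ (plus part of the next one or two), not for the trailing block inherited from the root, and the whole argument is vacuous when $u=v=1$; the paper glosses over both points. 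Your preferred argument --- writing $\alpha=(Ah_k+h_{k-1})/(Ak_k+k_{k-1})$, $\beta=(Bh_k+h_{k-1})/(Bk_k+k_{k-1})$, using $h_kk_{k-1}-h_{k-1}k_k=\pm1$ to get $|\alpha-\beta|=|A-B|/\bigl((Ak_k+k_{k-1})(Bk_k+k_{k-1})\bigr)\le 2/k_k^2$, and then $k_k\ge F_k\ge\phi^{k-1}$ with $\phi^2>2$ --- checks out, including the steps $|A-B|\le A+B\le 2AB$ for $A,B\ge1$ and the empty-tail and $k=0$ edge cases. It makes no use of the Calkin--Wilf structure at all and yields the stronger, uniform bound $|\alpha-\beta|=O(2^{-k})$ with an absolute constant, valid even for $u=v=1$ and for arbitrary roots. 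What you give up is the economy of quoting Lemma~\ref{bound} and Lemma~\ref{cfterms} verbatim; what you gain is a cleaner, strictly stronger estimate that closes the small gaps in the published one-liner, so your choice of route is the better one.
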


\begin{proof}
The corollary follows from the fact that if the two rationals $\alpha$ and $\beta$ are vertices on $(u,v)$-Calkin-Wilf trees, then $p_i$ is divisible by $v$ for even $i$ and divisible by $u$ for odd $i$ by Lemma~\ref{cfterms}. 
\end{proof}

Before we begin our proof of (B), we need one additional lemma.

\begin{lemma}\label{mcount}
Let $y=[q_0,q_1,\dots,q_r]$ with $q_r\neq 1$ when $y\neq 1$ and $r=0$ when $y=1$ and define $\ell(y)=r$. Let $f_z(n,m)=\#\{y\in\mathcal{T}^{(u,v)}(z;n):\ell(y)=m+\ell(z)\}$, then  for $m\geq 0$, 
\begin{align*}
    f_z(n,m) = \begin{cases}\displaystyle\binom{n+1}{m}& \text{ if } 2\nmid m \text{ and }z>1\\
    \displaystyle\binom{n+1}{m+1}& \text{ if } 2\nmid m \text{ and } z<1\\
    \displaystyle\binom{n}{m}& \text{ if } z=1\\
    0& \text{ otherwise.}\end{cases}
\end{align*}
\end{lemma}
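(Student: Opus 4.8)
The plan is to prove Lemma~\ref{mcount} by induction on $n$, tracking how the continued-fraction length $\ell(y)$ changes when passing from a vertex to its two children, and using a careful case analysis based on whether the vertex is $>1$, $<1$, or $=1$.

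First I would record the key structural fact coming from Lemma~\ref{cf}. If $y=[q_0,q_1,\dots,q_r]$ with $q_r\neq 1$ (the canonical short form), then the right child $(a+vb)/b=[v+q_0,q_1,\dots,q_r]$ always has the same length $r$. For the left child $a/(ua+b)$, there are two cases: if $q_0\neq 0$ (i.e. $y>1$, or $y=1$ in which case $y$ is handled separately) the left child is $[0,u,q_0,q_1,\dots,q_r]$, which has length $r+2$; if $q_0=0$ (i.e. $y<1$) the left child is $[0,u+q_1,q_2,\dots,q_r]$, which has length $r-1$. One subtlety: when $y<1$ with $r=1$, i.e. $y=[0,q_1]=1/q_1$, the left child is $[0,u+q_1]$, still of the form $1/(\text{integer})$, length $1$; and one must double-check the canonical-form convention (that the last partial quotient is not $1$, unless $y=1$) is preserved under these operations — it is, since $v+q_0\geq v\geq 1$ is the only new leading term that could equal $1$, but it sits at an even index $0$, and the trailing terms $q_1,\dots,q_r$ are unchanged. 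A short remark on why $\ell(y)-\ell(z)$ has fixed parity along any root-to-vertex path (each right step keeps length fixed, each left step changes it by $\pm 2$ except the $r\to r-1$ case — so I need to be slightly careful) is where the $2\nmid m$ conditions come from; I expect this parity bookkeeping to be the main obstacle, because the "$r-1$" left-child case breaks the naive "$\pm 2$" pattern and one has to see that it is always compensated.

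The cleanest way around this: classify vertices not just by $\ell$ but by the sign relative to $1$. Note that $y>1 \iff q_0\neq 0$ and $y<1\iff q_0=0$ (with $r\geq 1$), and the root region $[1/u,v]$ means $z=1/u<1$ or $z=v>1$ (or $z=1$ only in the degenerate $u=v=1$ case, handled by Proposition-type symmetry / Theorem~\ref{11mean}'s tree). Children: a vertex $>1$ has right child $>1$ (length unchanged) and left child $<1$ of length $r+2$; a vertex $<1$ has right child which is $[v+q_0,\dots]=[v,\dots]>1$ wait—if $q_0=0$ then right child is $[v,q_1,\dots,q_r]$ which is $>1$, length $r$; and left child $<1$ of length $r-1$. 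So from a vertex at length $\ell$: if it is $>1$ we get one vertex $>1$ at length $\ell$ and one vertex $<1$ at length $\ell+2$; if it is $<1$ we get one vertex $>1$ at length $\ell$ and one vertex $<1$ at length $\ell-1$. This gives a clean two-color recursion. Let $g(n,\ell)=\#\{y>1: \ell(y)=\ell\}$ and $h(n,\ell)=\#\{y<1:\ell(y)=\ell\}$ at depth $n$. Then $g(n,\ell)=g(n-1,\ell)+h(n-1,\ell)$ and $h(n,\ell)=g(n-1,\ell-2)+h(n-1,\ell+1)$. With the base case determined by $z$ (for $z=v$: $g(0,0)=1$, everything else $0$; for $z=1/u$: $h(0,1)=1$, everything else $0$), I would verify the claimed binomial formulas satisfy these recurrences, using the Pascal identity $\binom{n}{m}=\binom{n-1}{m}+\binom{n-1}{m-1}$. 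Concretely, for $z=v>1$ I expect $g(n,m)=\binom{n+1}{m+1}$ for $m$ even and $h(n,m)=\binom{n+1}{m+1}$ for $m$ odd (shifting so $\ell(z)=0$), and for $z=1/u<1$ the roles reverse; then $f_z(n,m)$ picks out the appropriate one of $g,h$ according to the parity of $m$ and the sign of $z$, matching the four-case statement. The final step is just bookkeeping: translate the $g,h$ identities back into the $\binom{n+1}{m}$ versus $\binom{n+1}{m+1}$ form stated, tracking the $\pm 1$ index shifts from the parity of $m+\ell(z)$, and check the $z=1$ case against the ordinary Calkin–Wilf tree where every left step adds $1$ and the count of depth-$n$ vertices with $m+1$ continued-fraction terms is the well-known $\binom{n}{m}$.
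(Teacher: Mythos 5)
There is a genuine gap, and it sits exactly where you flagged the ``main obstacle.'' Your structural claim that the left child of a vertex $y=[0,q_1,\dots,q_r]<1$ has length $r-1$ is a miscount: by Lemma~\ref{cf}(a) that child is $[0,u+q_1,q_2,\dots,q_r]$, which has the same $r+1$ partial quotients as $y$ (only $q_1$ is replaced by $u+q_1$), hence length $r$, not $r-1$. Your own sanity check for $r=1$ already says this --- you note $[0,u+q_1]$ has length $1=r$, contradicting the general ``$r-1$'' claim. Consequently the second of your two-color recurrences is wrong: it should be $h(n,\ell)=g(n-1,\ell-2)+h(n-1,\ell)$ rather than $g(n-1,\ell-2)+h(n-1,\ell+1)$, and the binomial formulas you propose to check will not satisfy the recurrence as you wrote it. Concretely, in $\mathcal{T}^{(1,2)}(2)$ the depth-$2$ vertices below $1$ with $\ell=2$ are $4/5=[0,1,4]$ and $2/5=[0,2,2]$, so $h(2,2)=2$, while your recurrence yields $g(1,0)+h(1,3)=1$.

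Once the transition rule is corrected there is no ``compensation'' to arrange: for $z\neq 1$ every edge changes $\ell$ by $0$ or $+2$ (the only $+1$ step occurs at the single left edge out of a root equal to $1$), so $m=\ell(y)-\ell(z)$ is always even. Your effort to engineer odd values of $m$ to match the displayed condition ``$2\nmid m$'' was therefore chasing a phantom; that condition appears to be a misprint for $2\mid m$ (e.g.\ $\mathcal{T}^{(1,2)}(2;1)=\{2/3,\,4\}$ has $\ell$-values $2$ and $0$, both even, yet the stated formula would place all the mass on $m=1$), and the application in Proposition~\ref{samelim} only uses the counts summed over all $m$, where the parity is immaterial. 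Apart from these two points, your architecture --- induction on depth with a case split on left/right children and on the position of the parent relative to $1$ --- is essentially the paper's, which organizes the same information as a three-term recursion over the grandchildren reached by $RR$ and $RL$ and the left child $L$, re-rooting the tree at those vertices; with the corrected transition rule and parities, your verification via Pascal's identity would go through. A cleaner alternative worth noting: Lemmas~\ref{cfterms} and~\ref{cfdepth} reduce the count directly to the number of tuples $(\alpha_0,\dots,\alpha_m)$ with $\alpha_0\geq 0$, $\alpha_i\geq 1$ for $i\geq 1$, and $\sum\alpha_i=n+1$, which is $\binom{n+1}{m}$ with no induction at all.
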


\begin{proof}
The desired result can be shown to be true for $n<2$ by inspection. 

Assume that the statement is true for all $0\leq j\leq k$ for some $k\geq 2$ and let $y\in\mathcal{T}^{(u,v)}(z;k+1)$ be such that $\ell(y)=m+\ell(z)$. That is, we assume $y$ is a rational number counted by $f_z(k+1,m)$. There is a sequence of rational numbers $z_0=z,z_1,\dots,z_{k+1}=y$ such that $z_{i+1}$ is a descendant of $z_i$ for $0\leq i<k+1$. By Lemma~\ref{cfterms}, we see that $\ell(z_{i+1})-\ell(z_i)\in\{0,1,2\}.$ In fact, for $i\geq 1$, $\ell(z_{i+1})-\ell(z_i)=2$ if and only if $z_{i+1}$ is a left child of $z_i$ and $z_i$ is a right child of $z_{i-1}$, $\ell(z_1)-\ell(z_0)=2$ if and only if $z_1$ is a left child of $z_0$ with $z_0>1$, and $\ell(z_1)-\ell(z_0)=1$ if and only if $z_1$ is a left child of $z_0$ with $z_0=1$. 

We now consider the following three cases:

\noindent\underline{Case 1:} $z_2$ is a right child of $z_1$ and $z_1$ is a right child of $z_0.$

In this case we have that $y\in\mathcal{T}^{(u,v)}(z_2;k-1)$ with $\ell(y)=m+\ell(z_2).$

\noindent\underline{Case 2:} $z_2$ is a left child of $z_1$ and $z_1$ is a right child of $z_0.$

In this case we have that $y\in\mathcal{T}^{(u,v)}(z_2;k-1)$ with $\ell(y)=m-2+\ell(z_2).$

\noindent\underline{Case 3:} $z_1$ is a left child of $z_0.$

In this case we have that $y\in\mathcal{T}^{(u,v)}(z_1;k)$ with 
\begin{align*}
\ell(y)=\begin{cases}
m-2+\ell(z_1)& \text{ if $z_0>1$}\\
m+\ell(z_1)& \text{ if $z_0<1$}\\
m-1+\ell(z_1)& \text{ if $z_0=1$}.
\end{cases}
\end{align*}

It follows from the three cases above that,
\begin{equation}\label{recursion}
    f_z(k+1,m) = \begin{cases}
    f_{z'}(k-1,m)+f_{z''}(k-1,m-2)+f_{z'''}(k,m-2)& \text{ if $z_0>1$}\\
    f_{z'}(k-1,m)+f_{z''}(k-1,m-2)+f_{z'''}(k,m)& \text{ if $z_0<1$}\\
    f_{z'}(k-1,m)+f_{z''}(k-1,m-2)+f_{z'''}(k,m-1)& \text{ if $z_0=1$}.
    \end{cases}
\end{equation}
where $z'=z_0+2v>1$, $z''=\frac{1}{u+\frac{1}{v+z_0}}<1$, and $z'''=\frac{1}{u+\frac{1}{z_0}}<1.$

We will now make heavy use of the well-known binomial coefficient identity $\binom{n}{m}=\binom{n-1}{m}+\binom{n-1}{m-1}$ to complete the proof.

For $z_0>1$, the desired result is trivially true when $2\mid m$, so we assume otherwise. Therefore, by assumption
\begin{align*}
    f_z(k+1,m) &= \binom{k}{m}+\binom{k}{m-1}+\binom{k+1}{m-1}\\
    &= \binom{k+1}{m}+\binom{k+1}{m-1}\\
    &= \binom{k+2}{m}.
\end{align*}

Similarly, for $z_0<1$, the desired result is also trivially true when $2\mid m$, so we assume otherwise. Therefore, by assumption
\begin{align*}
    f_z(k+1,m) &= \binom{k}{m}+\binom{k}{m-1}+\binom{k+1}{m+1}\\
    &= \binom{k+1}{m}+\binom{k+1}{m+1}\\
    &= \binom{k+2}{m+1}.
\end{align*}

Finally, for $z_0=1$, by assumption, when $m$ is odd,
\begin{align*}
    f_z(k+1,m) &= \binom{k}{m}+\binom{k}{m-1}+0\\
    &= \binom{k}{m}+\binom{k}{m-1}\\
    &= \binom{k+1}{m}
\end{align*}
and when $m$ is even,
\begin{align*}
    f_z(k+1,m) &= 0+0+\binom{k+1}{m}\\
    &= \binom{k+1}{m}.
\end{align*}
Having exhausted all possibilities, we complete the proof by induction. 
\end{proof}

An application of the de Moivre–-Laplace limit theorem~\cite[p. 186]{F} shows that the number of continued fraction coefficients in depth $n$ elements is normally distributed with mean approximately $n/2$.

Corollary~\ref{corbound} and Lemma~\ref{mcount} can now be used to compare the difference between rationals in different $(u,v)$-Calkin-Wilf trees that are in the same position relative to the root, showing that the mean values of the rows for different trees are asymptotically the same.

\begin{proposition}\label{samelim}
For any $z_1,z_2\in[1/u,v]\cap\mathbb{Q}$, we have that $$A^{(u,v)}(z_1;n)-A^{(u,v)}(z_2;n)\to 0$$ as $n\to\infty.$
\end{proposition}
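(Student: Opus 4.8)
The plan is to match up vertices in $\mathcal{T}^{(u,v)}(z_1;n)$ and $\mathcal{T}^{(u,v)}(z_2;n)$ that occupy the same position relative to the root (i.e.\ are reached by the same left/right word of length $n$), and to show that corresponding vertices are very close in value except for a vanishingly small fraction of positions. Concretely, write $A^{(u,v)}(z_1;n)-A^{(u,v)}(z_2;n)=2^{-n}\sum_{w}\big(x^{(1)}_w-x^{(2)}_w\big)$, where $w$ ranges over the $2^n$ left/right words of length $n$ and $x^{(i)}_w$ is the vertex of $\mathcal{T}^{(u,v)}(z_i;n)$ reached by following $w$ from the root $z_i$. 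The key observation, which comes from Lemma~\ref{cf} (or equivalently from Lemma~\ref{cfterms}), is that applying the same word $w$ to two different roots produces continued fractions whose \emph{tail} beyond some initial segment is identical: the word $w$ prepends/modifies the same front coefficients regardless of the root, so if the resulting vertex has $\ell(x_w^{(i)})$ large compared to $\ell(z_i)$, then $x_w^{(1)}$ and $x_w^{(2)}$ agree in all but the last $O(1)$ continued fraction coefficients — in fact in all coefficients except a bounded number near the ``root end.''

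Next I would quantify this. By Lemma~\ref{cfterms}, for two vertices on $(u,v)$-Calkin-Wilf trees the early coefficients $p_1,p_2,\dots$ are divisible by $u$ or $v$ alternately, hence each is at least $\min\{u,v\}\geq 1$; more usefully, the product $\prod_{j=1}^k p_j^{-2}$ in Lemma~\ref{bound} decays geometrically, and Corollary~\ref{corbound} packages this as $x^{(1)}_w-x^{(2)}_w=O\!\big(\max\{u,v\}/2^{k_w}\big)$, where $k_w$ is (essentially) the number of leading continued-fraction coefficients shared by $x^{(1)}_w$ and $x^{(2)}_w$. Since the shared tail has length roughly $\min\{\ell(x^{(1)}_w),\ell(x^{(2)}_w)\}$ minus a bounded constant, $k_w$ is comparable to $\ell(x_w^{(i)})$. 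Now Lemma~\ref{mcount} tells us how many words $w$ produce a vertex with a given value of $\ell$: the counts are binomial coefficients $\binom{n+1}{m}$ (up to the parity/sign shifts recorded there), so the number of positions with $\ell(x_w^{(i)})\leq \varepsilon n$ is at most a sum of binomial coefficients in the lower tail, which by the de Moivre–Laplace estimate (the remark following Lemma~\ref{mcount}) is $o(2^n)$ for any fixed $\varepsilon<1/2$.

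Putting this together: split the $2^n$ words into the ``good'' ones with $\ell(x_w^{(i)})>\varepsilon n$ for both $i=1,2$ and the ``bad'' ones. For good words, $|x^{(1)}_w-x^{(2)}_w|=O\!\big(\max\{u,v\}\,2^{-\varepsilon n}\big)$, so their total contribution to $2^{-n}\sum_w(x^{(1)}_w-x^{(2)}_w)$ is $O(\max\{u,v\}\,2^{-\varepsilon n})\to 0$. For bad words, I need an individual bound on $|x^{(1)}_w-x^{(2)}_w|$; this is where the main obstacle lies, since a vertex can be as large as $\approx nv$, so the naive bound on a bad-word contribution is (number of bad words)$\times nv$, and I must ensure this is still $o(2^n)$. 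This works because Lemma~\ref{mcount} combined with de Moivre–Laplace gives that the number of bad words is $O(2^n e^{-cn})$ for some $c=c(\varepsilon)>0$, which beats the polynomial factor $nv$; one also has to handle, separately and by hand, the ``very bad'' words where $x_w^{(i)}$ is itself atypically large (a right-heavy word), but those are exponentially rare by the same binomial count applied to the number of R's in $w$, and each such vertex is still only $O(nv)$. So the bad contribution is $O(nv\cdot 2^{-cn})=o(1)$ as well, and combining the two estimates gives $A^{(u,v)}(z_1;n)-A^{(u,v)}(z_2;n)\to 0$. The delicate point to get right is the bookkeeping relating $k_w$ (shared leading coefficients) to $\ell(x_w^{(i)})$ uniformly in $w$ and in the two roots — i.e.\ checking via Lemma~\ref{cfterms} that running a common word $w$ on $z_1$ and on $z_2$ really does leave only a bounded, root-dependent-but-$n$-independent block of coefficients unshared — and making sure the error constants in Corollary~\ref{corbound} do not secretly depend on $n$.
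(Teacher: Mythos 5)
Your proposal is correct in outline and uses the same two key ingredients as the paper (Corollary~\ref{corbound} to bound individual differences via shared leading continued-fraction coefficients, and Lemma~\ref{mcount} to control how many vertices have few coefficients), but the pairing is genuinely different. The paper fixes the extreme roots $z_1=1/u$, $z_2=v$ and builds an explicit bijection $f$ on continued-fraction representations that alters only the \emph{last} one or two coefficients of $y=[\alpha_0v,\dots,\alpha_kv]$, so the shared prefix is maximal ($k_w=\ell(y)-1$ on the nose) and the remaining roots are handled by separate cases; you instead pair vertices occupying the same left/right position in the two trees, which works uniformly for arbitrary $z_1,z_2\in[1/u,v]\cap\mathbb{Q}$ and avoids the case split, at the cost of having to verify (via Lemma~\ref{cfterms}, or the mechanics of Lemma~\ref{cf}) that running a common word on two roots yields continued fractions agreeing except in a block of length $O_{z_1,z_2}(1)$ at the tail --- this is the ``delicate point'' you flag, and it is true, since the root's coefficients occupy positions $s-r$ through $s$ and the earlier coefficients are determined by the word alone. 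Your final estimate is more complicated than necessary: there is no need for a good/bad split with de Moivre--Laplace and an $O(nv)$ bound on bad positions, because summing the bound $O(\max\{u,v\}2^{-k_w})$ directly against the counts from Lemma~\ref{mcount} gives
\begin{equation*}
\frac{1}{2^n}\sum_{m}\binom{n+1}{m}\,O\!\left(\frac{\max\{u,v\}}{2^{m}}\right)=O\!\left(\max\{u,v\}\left(\tfrac{3}{4}\right)^{n}\right),
\end{equation*}
which is exactly how the paper concludes and also yields an explicit rate. Your split is not wrong, just weaker and more work.
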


\begin{proof}
We begin by considering the case where $z_1=1/u$ and $z_2=v$. Let $y\in\mathcal{T}^{(u,v)}(v;n)$. Then by Lemma~\ref{cfterms} and Lemma~\ref{cfdepth}, $y$ has a continued fraction representation of the form $y=[\alpha_0v,\alpha_1u,\dots,\alpha_kv]$ with $\sum_{i=0}^k\alpha_i=n+1$. Consider the map $f:\mathcal{T}^{(u,v)}(v;n)\to\mathcal{T}^{(u,v)}(1/u;n)$ given by
\begin{align*}
    f(y)=\begin{cases}[\alpha_0v,\alpha_1u,\dots,(\alpha_{k-1}+1)u]& \text{ if $\alpha_k=1$}\\
    [\alpha_0v,\alpha_1u,\dots,(\alpha_k-1)v,u]& \text{ otherwise.}\end{cases}
\end{align*}
It is clear that $f$ represents a well-defined bijection. In particular, by Corollary~\ref{corbound} and Lemma~\ref{mcount},
\begin{align*}
    A^{(u,v)}\left(\frac{1}{u};n\right)-A^{(u,v)}(v;n) 
    &= \frac{1}{2^n}\sum_{y\in\mathcal{T}^{(u,v)}(v;n)}f(y)-y\\
    &= O\Bigg(\frac{\max\{u,v\}}{2^n}\Bigg(\sum_{y\in\mathcal{T}^{(u,v)}(v;n),a_k=1}\frac{1}{2^{k-1}}+\sum_{y\in\mathcal{T}^{(u,v)}(v;n),a_k>1}\frac{1}{2^k}\Bigg)\Bigg)\\
    &= O\Bigg(\frac{\max\{u,v\}}{2^n}\sum_{y\in\mathcal{T}^{(u,v)}(v;n)}\frac{1}{2^k}\Bigg)\\
    &= O\Bigg(\frac{\max\{u,v\}}{2^n}\sum_{k=0}^{n+1}\binom{n+1}{k}\frac{1}{2^k}\Bigg)\\
    &= O\Bigg(\max\{u,v\}\cdot \left(\frac{3}{4}\right)^n\Bigg),
\end{align*}
which goes to 0 as $n\to\infty.$

The cases $z_1=1/u$ and $z_2\in(1/u,1]\cap\mathbb{Q}$ and $z_1=v$ and $z_2\in[1,v)\cap\mathbb{Q}$ can be handled in a similar way. These three cases complete the proof of the proposition.
 
\end{proof}

Proposition~\ref{samelim} completes the proof of (B), giving the desired result.

\section{Acknowledgement}
The second author received support for this project provided by a PSC-CUNY Award, \#611157-00 49, jointly funded by The Professional Staff Congress and The City University of New York.

\end{document}